\def\E{{\mathbb E}}
\def\N{{\mathbb N}}
\def\P{{\mathbb P}}
\def\R{{\mathbb R}}
\def\be{\bm{e}}
\def\bF{\mathbb{F}}
\def\bmu{\bar \mu}
\def \cf{\mathcal{F}}
\def \cM{\mathcal{M}}
\def \cA{\mathcal{A}}
\def\tr{\mathop {\mathrm{tr}}}
\def\expp#1{\mathop {\mathrm{e}^{ #1}}}
\def\bracket#1{\left\langle #1 \right\rangle}
\def\abs#1{\left|#1\right|}
\def\norm#1{\left|#1\right|}
\def\Norm#1{\left\|#1\right\|}
\def\ind#1{\; {\mathbf 1}_{\{#1\}}}
\def\sphere{\mathcal{S}(\R^3)}
\theoremstyle{plain}
\newtheorem{thm}{Theorem}
\newtheorem{prop}[thm]{Proposition}
\newtheorem{lemma}[thm]{Lemma}
\def\phiX{20}
\def\phiY{0}
\def\phiZ{20}
\newcommand{\RotA}[3]{
  \pgfmathparse{cos(\phiY)*cos(\phiZ)*#1+(-sin(\phiY)*sin(\phiX)*cos(\phiZ)+sin(\phiZ)*cos(\phiX))*#2+(sin(\phiY)*cos(\phiX)*cos(\phiZ)+sin(\phiX)*sin(\phiZ))*#3} \let \locXa \pgfmathresult
  \pgfmathparse{-sin(\phiZ)*cos(\phiY)*#1+(sin(\phiZ)*sin(\phiY)*sin(\phiX)+cos(\phiX)*cos(\phiZ))*#2+(-sin(\phiZ)*cos(\phiY)*cos(\phiX)+sin(\phiX)*cos(\phiZ))*#3} \let \locYa \pgfmathresult 
}
\newcommand{\RotB}[3]{
  \pgfmathparse{cos(\phiY)*cos(\phiZ)*#1+(-sin(\phiY)*sin(\phiX)*cos(\phiZ)+sin(\phiZ)*cos(\phiX))*#2+(sin(\phiY)*cos(\phiX)*cos(\phiZ)+sin(\phiX)*sin(\phiZ))*#3} \let \locXb \pgfmathresult
  \pgfmathparse{-sin(\phiZ)*cos(\phiY)*#1+(sin(\phiZ)*sin(\phiY)*sin(\phiX)+cos(\phiX)*cos(\phiZ))*#2+(-sin(\phiZ)*cos(\phiY)*cos(\phiX)+sin(\phiX)*cos(\phiZ))*#3} \let \locYb \pgfmathresult 
}
\begin{document}

% \title{A nano particle submitted to a Stratonovich noise}
\title{Stochastic modelling of thermal effects on a ferromagnetic nano particle}
% A nano particle submitted to a Stratonovich noise}

\renewcommand\Authand{\space\space and }
\author{St\'ephane Labb\'e\footnote{\texttt{stephane.labbe@imag.fr}}}
\author{J\'er\^ome Lelong\footnote{\texttt{jerome.lelong@imag.fr}}}
\affil{Laboratoire Jean Kuntzmann, Université Grenoble Alpes, FRANCE.}

\date{\today}

\maketitle

\begin{abstract}
  In this work, we are interested in the behaviour of a single ferromagnetic mono--domain
  particle submitted to an external field with a stochastic perturbation. This model is a
  step toward the mathematical understanding of thermal effects on ferromagnets.  In a
  first part, we discuss modelling issues and propose several ways to integrate a random
  noise in the deterministic model. Then, among all these approaches, we focus on the more
  natural one and study its long time behaviour. We prove that the system converges to the
  unique stable equilibrium of the deterministic model and make precise the $L^p$ rate of
  the convergence. Finally, we illustrate the theoretical results by numerical
  simulations.\\

  \noindent \textbf{Keywords}: thermal effects modelling; stochastic dynamical systems; convergence
  rate; ferromagnetism. \\
  \noindent \textbf{AMS Classification}: 60F15, 60F25, 60Z05, 93E15.
\end{abstract}

\section{Introduction}

The use of stochastic modelling for ferromagnetic particles goes back to the seminal paper
by \cite{PhysRev.130.1677} on the physical aspects of the problem. This problematic has
been extensively studied by physicists in several application domains. For example, we can
cite a wide range works on the suspension of heated magnetic particles going from
\cite{C6CP02793H} for the microscale approach to studies implying thermal effects in
larger magnetic structures such as those used in magnetic recording, see for instance
\cite{RevModPhys.77.1375}.  In this work, we focus on a single ferromagnetic mono--domain
particle submitted to an external field, whose behaviour is usually modelled by the
following deterministic dynamical system:
\begin{equation}
  \label{eq:ode}
  \frac{d\mu}{dt}=-\mu\wedge b-\alpha \mu\wedge (\mu\wedge b), \quad \mu_0 \in
  \sphere
\end{equation}
where $b\in\R^3$ is the external magnetic field, $\alpha \in \R_+$ and $\sphere$
classically denotes the unit sphere in $\R^3$.  It is clear that $\norm{\mu_t}=1$ for all
$t \ge 0$.  We introduce the antisymmetric operator  $L : \R^3 \longmapsto \cM_{3 \times
  3}$ associated to the cross product in $\R^3$
\begin{equation*}
  L(x) = \left(
  \begin{array}{ccc}
    0 & -x^3 & x^2\\
    x^3& 0&-x^1 \\
    -x^2 & x^1 & 0\\
  \end{array}
  \right).
\end{equation*}
Let $A : \R^3 \longrightarrow \cM_{3 \times 3}$ be the operator defined by
\begin{align*}
  A(x)=\alpha x^* x I-\alpha x x^* - L(x)
\end{align*}
where $I$ is the identity matrix in $\cM_{3 \times 3}$.
We can write~\eqref{eq:ode} as
\begin{equation}
  \label{eq:ode2}
  \frac{d\mu}{dt} =  A(\mu) b, \quad \mu_0 \in \sphere.
\end{equation}

This paper aims at introducing stochastic perturbations in order to model thermal effects.
We already published an article \cite{ell:14} on the same subject but using a different
modelling of the stochastic perturbation; in the present work, we use the Stratonovich
integral to design an alternative model to the rescaled Itô approach studied
in~\cite{ell:14}.  First, we recall the model studied in the previous work along with the
main results we obtained.  Second, we explore two different approaches to model the
effects of a heat pulse on the behaviour of a ferromagnet: we can either pull the Itô
equation back onto the sphere with a suitable projection or we can interpret the
stochastic model in the Stratonovich sense.  We will see that both approaches actually
coincide when considering the Euclidean projection. Then, we focus on the mathematical
study of the Stratonovich approach and establish its long time convergence. Finally, we
present some numerical results illustrating the theoretical results developed previously.

\section{Modelling issues}

Let $(\Omega, \cA, \bF = (\cf_t)_{t \ge 0}, \P)$ be a filtered probability space.  We
consider a  standard $\bF-$Brownian motion $W$ with values in $\R^3$.
Thermal effects can be embedded in the deterministic system~\eqref{eq:ode2} by adding a
stochastic perturbation to the external field $b$, which naturally leads to the following 
stochastic system
\begin{equation}
  \label{eq:sto1}
  d\mu_t = A(\mu_t) b dt + \varepsilon A(\mu_t) dW_t.
\end{equation}
If this SDE is interpreted in the Itô sense, the system is not physically consistent as
it does not preserve $\norm{\mu_t}$ whereas this is a physical invariant
of~\eqref{eq:ode2}, see \cite{Brown:magnet}. In this section, we investigate several ways
of modifying the stochastic model to ensure its consistency with the physical model.
First, we recall the rescaled approach developed in \cite{ell:14}. Second, we interpret
the model~\eqref{eq:sto1} in the Stratonovich sense and study its long time behaviour. 

\subsection{The Itô approach}

We recall the model developed in \cite{ell:14} along with some convergence results
\begin{equation*}
  \begin{cases}
    dY_t &=  A(\mu_t) b dt + \varepsilon A(\mu_t) dW_t \\
    \mu_t & = \frac{Y_t} {|Y_t|}\\
    Y_0&=y\in \sphere.
  \end{cases}
\end{equation*}
The process $(\mu_t)_t$ converges to $b$ when $t$
goes to infinity with the rate $\sqrt{t}$: let $h(t) = \norm{Y_t} =
\sqrt{2 \varepsilon^2(\alpha^2+1) t + 1}$, which is deterministic
for this model.  The process $\mu$ solves the following SDE
\begin{align*}
  d\mu_t = \left(-\frac{h'(t)}{h(t)} \mu_t + \frac{A(\mu_t)}{h(t)} b\right) dt +
  \frac{\varepsilon}{h(t)} A(\mu_t) dW_t,
\end{align*}
from which it clearly appears that the noise added to the model vanishes at the rate
$\varepsilon h(t)^{-1}$. We recall the main result from \cite{ell:14} concerning the
convergence of $\mu$ when $\alpha >0$.
\begin{thm}
  Assume $\alpha>0$. Then,
  $$\lim_{t \to \infty} \mu_t = b / \norm{b} \mbox{a.s.} \quad \mbox{and} \quad \lim_{t
  \to \infty} \E[h(t) (\norm{b} - \mu_t \cdot b)] = \frac{\varepsilon^2 (\alpha^2+1)}{2
  \alpha}.
$$
\end{thm}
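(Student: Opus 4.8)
The plan is to prove this in two stages: first establish almost‑sure convergence of $\mu_t$ to $b/\norm{b}$, and then sharpen this into the precise $L^1$ (or $L^p$) rate of convergence encoded in the limit of $\E[h(t)(\norm{b} - \mu_t \cdot b)]$. Throughout I would exploit the explicit, \emph{deterministic} normalizing factor $h(t) = \sqrt{2\varepsilon^2(\alpha^2+1)t + 1}$, which is the key simplification of the rescaled Itô model: since $\norm{Y_t} = h(t)$ is non‑random, the only genuine randomness lives in the angular part $\mu_t = Y_t/\norm{Y_t}$, and the dynamics of $Y$ decouple into a deterministic growth in the radial direction and a driven motion on the sphere.

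First I would study the natural Lyapunov-type quantity measuring the gap to the target, namely $\psi_t = \norm{b} - \mu_t \cdot b$, which is nonnegative and vanishes exactly at the stable equilibrium $b/\norm{b}$. Writing $\mu_t = Y_t/h(t)$ and applying Itô's formula to $\mu_t \cdot b$, the drift $A(\mu_t)b = -\mu_t \wedge b - \alpha\, \mu_t \wedge(\mu_t \wedge b)$ contributes, via the identity $\mu_t \cdot (\mu_t \wedge b) = 0$ and the expansion of the double cross product, a term proportional to $\alpha(\norm{b}^2 - (\mu_t\cdot b)^2)/h(t)$ that is strictly dissipative whenever $\alpha > 0$; the $-h'(t)/h(t)$ drift term and the Itô correction from the quadratic variation of $\varepsilon h(t)^{-1}A(\mu_t)\,dW_t$ also enter. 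Collecting these, one obtains a scalar SDE for $\mu_t \cdot b$ of the form
\begin{align*}
  d(\mu_t \cdot b) = \left(\frac{\alpha}{h(t)}\bigl(\norm{b}^2 - (\mu_t\cdot b)^2\bigr) + \text{(correction terms)}\right) dt + dM_t,
\end{align*}
where $M_t$ is a local martingale whose bracket is controlled by $\varepsilon^2 h(t)^{-2}$, hence integrable in $t$ once $h(t)^2$ grows linearly. The almost‑sure convergence $\mu_t \cdot b \to \norm{b}$ should then follow from a supermartingale/ODE-comparison argument: the dissipative drift pushes $\mu_t\cdot b$ toward $\norm{b}$, while the martingale part has finite quadratic variation up to time-dependent factors that vanish, so a semimartingale convergence theorem (e.g.\ Robbins–Siegmund) applies.

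For the sharp rate, I would rescale and set $Z_t = h(t)\,\psi_t = h(t)(\norm{b} - \mu_t \cdot b)$ and compute $dZ_t$ via the product rule, using the explicit form of $h'(t)$. The point is that $h'(t)/h(t) \sim 1/(2t)$ while the dissipative drift acts at rate $\alpha/h(t) \sim \alpha h(t)^{-1}$; balancing the radial relaxation against the noise injection (which, after the Itô correction, pumps energy at a rate proportional to $\varepsilon^2(\alpha^2+1) h(t)^{-2}$) produces a deterministic fixed-point balance whose value is exactly $\varepsilon^2(\alpha^2+1)/(2\alpha)$. Concretely, taking expectations to kill the martingale term and writing an ODE for $e(t) := \E[Z_t]$, I expect something like $e'(t) = -c(t)\,e(t) + g(t)$ with $\int c = \infty$ and $g(t)\to \varepsilon^2(\alpha^2+1)/(2\alpha)\cdot c(t)$ asymptotically, so that $e(t)$ converges to the ratio $\lim g/c$; near the equilibrium one linearizes $\norm{b}^2 - (\mu_t\cdot b)^2 = (\norm{b}+\mu_t\cdot b)\psi_t \approx 2\norm{b}\,\psi_t$, which is what makes the constant come out cleanly.

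The main obstacle I anticipate is \textbf{controlling the nonlinear and higher-order remainder terms} when passing from the exact SDE to the linearized asymptotic balance. Two difficulties compound here: the quantity $\norm{b}^2 - (\mu_t\cdot b)^2 = (\norm{b}+\mu_t\cdot b)(\norm{b}-\mu_t\cdot b)$ is only \emph{asymptotically} proportional to $\psi_t$ (the prefactor $\norm{b}+\mu_t\cdot b$ tends to $2\norm{b}$ but must be handled uniformly), and the cross terms between $h(t)$ and the fluctuations of $\mu_t$ involve the transverse components of $\mu_t$, whose second moments must be shown to decay like $h(t)^{-1}$ as well. I expect the cleanest route is to establish $L^p$ bounds on $\psi_t$ first — showing $\E[\psi_t^p] = O(h(t)^{-p})$ by a Gronwall/moment estimate on the scalar SDE — and then use these to prove that the remainder contributions to $e'(t)$ are $o(c(t))$, so they do not affect the limiting constant. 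Uniform integrability of $Z_t$, needed to exchange limit and expectation and to justify that the local martingale is a true martingale, will likewise rest on these moment bounds; I would prove them before attempting the delicate asymptotic balance that pins down the constant $\varepsilon^2(\alpha^2+1)/(2\alpha)$.
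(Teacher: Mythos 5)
First, a point of reference: the paper does not actually prove this theorem --- it is recalled from \cite{ell:14} --- but the proofs of Propositions~\ref{prop:StratoDecrAS} and~\ref{prop:rate} for the Stratonovich model follow essentially the architecture you propose (martingale convergence plus an ODE comparison for the almost-sure statement; moment bounds of the type $\E[(\norm{b}-\mu_t\cdot b)^p]=O(h(t)^{-p})$ followed by an averaged linear ODE and Lemma~\ref{lem:expint} to identify the constant). Your balance $e'(t)\approx -\frac{2\alpha\norm{b}}{h(t)}e(t)+\frac{\varepsilon^2(\alpha^2+1)\norm{b}}{h(t)}$ for $e(t)=\E[h(t)(\norm{b}-\mu_t\cdot b)]$ is correct and does yield $\varepsilon^2(\alpha^2+1)/(2\alpha)$, with the remainder $\alpha\E[(\norm{b}-\mu_t\cdot b)^2]$ controlled by the second-moment bound exactly as you anticipate. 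So the second half of your plan is sound.

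The genuine gap is in the almost-sure convergence step. First, your claim that the martingale bracket is integrable is false as stated: $d\langle M\rangle_t=\varepsilon^2(\alpha^2+1)h(t)^{-2}\left(\norm{b}^2-(\mu_t\cdot b)^2\right)dt$ and $h(t)^{-2}\sim\left(2\varepsilon^2(\alpha^2+1)t\right)^{-1}$ is \emph{not} integrable, so convergence of $M$ is not automatic; it requires first bounding $\int_0^t h(s)^{-1}\E[\norm{b}^2-(\mu_s\cdot b)^2]\,ds$ through the drift of $\E[\mu_t\cdot b]$ (the integration-by-parts trick of Step~1, case~(i), in the proof of Proposition~\ref{prop:StratoDecrAS}). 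As written, your argument is circular: integrability of the bracket is essentially equivalent to the decay of $\norm{b}^2-(\mu_t\cdot b)^2$ that you are trying to establish. Second, Robbins--Siegmund does not apply directly to $\psi_t=\norm{b}-\mu_t\cdot b$, because the positive injection term $h'(t)\norm{b}/h(t)\sim\norm{b}/(2t)$ in its drift is not summable. Third, and most importantly, you never address the unstable equilibrium $-b/\norm{b}$: the dissipative drift $\frac{\alpha}{h(t)}\left(\norm{b}^2-(\mu_t\cdot b)^2\right)$ vanishes there too, so the assertion that ``the dissipative drift pushes $\mu_t\cdot b$ toward $\norm{b}$'' fails on the south pole cap. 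Escape from that cap comes from the term $-\frac{h'(t)}{h(t)}\mu_t\cdot b>0$ when $\mu_t\cdot b\le-\delta_1$, and one needs $\int_0^\infty \frac{h'(t)}{h(t)}\,dt=\infty$ together with a three-zone comparison on the sphere (cf.\ Figure~\ref{fig:sphere}) to show that the process leaves the cap in finite time and then gets trapped near the north pole. These are precisely the points the paper's proof of the analogous Proposition~\ref{prop:StratoDecrAS} is built around, and your proposal cannot go through without them.
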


If instead of rescaling, we simply move the dynamics back on to the sphere with an extra
term $K$, we get the following model
\begin{equation*}
  d\mu_t = A(\mu_t) b dt + \varepsilon A(\mu_t) dW_t + K_t dt
\end{equation*}
Using that $A(x) x = 0$, we deduce that $d\abs{\mu_t}^2 = 2 \mu_t \cdot K_t dt +
\tr(d\bracket{\mu}_t)$. An easy computation shows that $\tr(d\bracket{\mu}_t) = 2
\varepsilon^2 (\alpha^2 \abs{\mu_t}^2 + 1) \abs{\mu_t}^2 dt$.  Then, the condition
$\abs{\mu_t} = 1$ imposes to choose $K_t = -\varepsilon^2 (\alpha^2 \abs{\mu_t}^2 + 1)
\mu_t + K^\perp_t$, where $K^\perp_t$ is orthogonal to $\mu_t$ for all $t$. The final term
$K_t$ can be thought of as pulling the process $\mu$ back onto the sphere. The
minimum norm pull is obtained by choosing $K^\perp = 0$, which leads to
\begin{equation*}
  d\mu_t = A(\mu_t) b dt + \varepsilon A(\mu_t) dW_t - \varepsilon^2 (\alpha^2 \abs{\mu_t}^2 + 1) \mu_t dt
\end{equation*}
and it simplifies into
\begin{equation}
  \label{eq:ito-pull}
  d\mu_t = A(\mu_t) b dt + \varepsilon A(\mu_t) dW_t - \varepsilon^2 (\alpha^2 + 1) \mu_t dt
\end{equation}
as $\abs{\mu_t}^2 = 1$. This equation will show up later as the Itô form of the
Stratonovich stochastic model. The idea of taking for the $K_t$ the Euclidean projection
can be used to define the spherical Brownian motion, see~\cite{BR97} for a survey of
possible approaches to define the spherical Brownian motion.

\subsection{The Stratonovich approach}

Instead of trying to modify the Itô SDE~\eqref{eq:sto1} to satisfy the physical constraint
$\abs{\mu_t} = 1$, it is possible to interpret the stochastic term $\varepsilon A(\mu_t)
dW_t$ in the Stratonovich sense. \\

Let $\partial$ denote the Stratonovich differential operator. Let $(\bmu_t)_t$ denote the
stochastic system with a Stratonovich perturbation. We assume that the magnitude of the
stochastic perturbation is given by a deterministic positive function $(\varepsilon_t)_t$.
In this section, we consider the stochastic model defined by the following Stratonovich
SDE
\begin{align}
  \label{eq:sys_sto_strato}
  \partial \bmu_t &=   A(\bmu_t) b \partial t + \varepsilon_t A(\bmu_t) \partial W_t.
\end{align}
If we compute $\partial |\bmu_t|^2 = 2 \bmu_t \cdot \partial \bmu_t$ using
Equation~\eqref{eq:sys_sto_strato}, we immediately notice that $\partial |\bmu_t|^2 =
0$. Now, we turn this Stratonovich SDE into an Itô SDE (see \cite[V.30]{RoWill2})
\begin{align*}
  d \bmu_t &=   A(\bmu_t) b dt + \varepsilon_t A(\bmu_t) d W_t + \frac{1}{2}
  \varepsilon_t^2 \sum_{q=1}^3 \sum_{j=1}^3 (A_{jq}  D_j (A_{iq}))(\bmu_t),
\end{align*}
where $D_j$ denotes the partial derivative with respect to the $j-th$ component.
\begin{align*}
  \sum_{q=1}^3 \sum_{j=1}^3 (A_{jq}  D_j (A_{\cdot q}))(x) =  \sum_{j=1}^3 (D_j A)
  A^*_{\cdot j} (x).
\end{align*}
Let us compute $D_j A$ by using that $D_j(x) = \be_j$, where $\be_j$
is the $j-th$ vector of the canonical basis.
\begin{align*}
  D_j A (x) & = \alpha (D_j x^* x + x^* D_j x) I -\alpha (D_j x x^* + x D_jx^* ) - L(D_j (x)) \\
  & = \alpha (2 x^* \be_j I - \be_j  x^* - x \be_j^* ) - L(\be_j).
\end{align*}
\begin{align*}
  \sum_{j=1}^3 (D_j A) A^*_{\cdot j} (x) &= \sum_{j=1}^3 \left( -\alpha
  (- 2 x^* \be_j I + \be_j  x^* + x \be_j^* ) - L(\be_j) \right) \left(\alpha |x|^2 \be_j -\alpha x x_j -
  L(\be_j) x \right) \\
  &= \sum_{j=1}^3 \alpha^2 (2 |x|^2 x_j \be_j - 2 x_j^2 x - |x|^2 \be_j x^* \be_j  + \be_j
  x^* x x_j + -|x|^2 x \be_j^* \be_j + x \be_j^* x x_j) \\
  & \qquad + L(\be_j) L(\be_j) x.
\end{align*}
All the other terms vanish by using the linearity of the operator $L$. Hence,

\begin{align*}
  \sum_{j=1}^3 (D_j A) A^*_{\cdot j} (x) 
  &= - 2 \alpha^2 |x|^2 + \sum_{j=1}^3 (\be_j \cdot x) \be_j - (\be_j \cdot \be_j ) x
  = - 2 (\alpha^2 |x|^2 + 1) x.
\end{align*}
Hence, $(\bmu_t)_t$ solves the following Itô SDE
\begin{align*}
  d \bmu_t &=   (A(\bmu_t) b  - \varepsilon_t^2 (\alpha^2 |\bmu_t|^2 + 1) \bmu_t)  d t +
  \varepsilon_t A(\bmu_t) d W_t.
\end{align*}
As by construction $|\bmu_t|^2 = 1$, the process $(\bmu_t)_t$ also solves
\begin{align}
  \label{eq:strato}
  d \bmu_t &=  (A(\bmu_t) b - \varepsilon_t^2 (\alpha^2 + 1) \bmu_t)  d t +
  \varepsilon_t A(\bmu_t) d W_t.
\end{align}
This equation is similar to~\eqref{eq:ito-pull}, which was obtained by pulling the Itô
process back to the sphere. This similarity of the two equations actually advocates to
interpret the noise in the Stratonovich sense as it naturally preserves the norm of $\mu$.
However, it is easy to check that
\begin{equation*}
  (d(\bmu_t \cdot b))_{\Big| \bmu_t = b / \norm{b}} = - \varepsilon_t^2 (\alpha^2
  + 1) \norm{b} dt.
\end{equation*}
This implies that $b$ cannot be an equilibrium point of the stochastic system $(\bmu_t)_t$
when $\varepsilon_t = \varepsilon > 0$ for all $t$. Moreover, for the constant magnitude
case, it was proved in~\cite{ell:14} that $\bmu_t$ could not converge to $b$. Actually,
\eqref{eq:strato} very much looks like a continuous time stochastic approximation and it
is known from~\cite{benaim99,FortPages99} that a long time stationary measure can only be
obtained when letting $\varepsilon$ go to zero as well. Hence, it is natural to consider in the
following the decreasing noise framework, ie. when $\varepsilon_t$ is a decreasing
positive function.

\section{The Stratonovich model with decreasing noise}
\label{sec:strato}

In this section, we assume that $(\varepsilon_t)_t$ is a decreasing function satisfying
$\varepsilon_t > 0$ for all $t \ge 0$ and $\displaystyle \lim_{t \to \infty} \varepsilon_t = 0$.
\begin{align*}
  d \bmu_t &=  -  (A(\bmu_t) b + \varepsilon_t^2 (\alpha^2 + 1) \bmu_t)  d t +
  \varepsilon_t A(\bmu_t) d W_t,
\end{align*}
where the operator $A$ simplifies into $A(x) = \alpha I - \alpha x x^* - L(x)$ for $x \in
\sphere$.  Then, we deduce that
\begin{align}
  \label{eq:mub}
  d( \bmu_t \cdot b)  =&  - \left\{ \alpha \left( (\bmu_t \cdot b)^2 -
      \norm{b}^2 \right) + \varepsilon_t^2 (\alpha^2 +1) \bmu_t \cdot b \right\} dt
  \nonumber \\
  & - \varepsilon_t \big( -L(\bmu_t) b + \alpha ((\bmu_t \cdot b) \bmu_t -b)
  \big) \cdot dW_t.
\end{align}
As $\bmu$ is bounded, the stochastic integral in the above equation is a martingale with
expectation zero.  For future computations, it will be useful to know the quadratic variation of the process $(\bmu_t \cdot
b)_t$: 
\begin{align*}
  d<\bmu \cdot b>_t = \varepsilon_t^2 \abs{-L(\bmu_t) b + \alpha ((\bmu_t \cdot b) \bmu_t
    -b)}^2 dt = \varepsilon_t^2 (\alpha^2 + 1) (\abs{b}^2 - (\bmu_t \cdot b)^2) dt.
\end{align*}

\subsection{The case $\alpha>0$}

\begin{prop}
  \label{prop:StratoDecrAS} Assume that one of the following conditions holds
  \begin{enumerate}[(i)]
    \item \label{hyp:L4} $\int_0^\infty \varepsilon_t^2 dt = \infty$ and
      $\int_0^\infty \varepsilon_t^4 dt < \infty$.
    \item \label{hyp:L2} $\int_0^\infty \varepsilon_t^2 dt < \infty$.
  \end{enumerate}
  Then, $\bmu_t \xrightarrow[t \to \infty]{} \frac{b}{\norm{b}}$ a.s.
\end{prop}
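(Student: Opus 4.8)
The plan is to reduce the three-dimensional dynamics to the scalar process $m_t = \bmu_t \cdot b \in [-\norm{b}, \norm{b}]$, whose Itô dynamics and quadratic variation are exactly~\eqref{eq:mub} and the displayed formula following it, and to track the gap $g_t = \norm{b} - m_t \ge 0$, so that the proposition is equivalent to $g_t \to 0$ a.s. From~\eqref{eq:mub}, writing $\norm{b}^2 - m_t^2 = g_t(2\norm{b}-g_t)$ and $m_t = \norm{b} - g_t$, one gets
\begin{align*}
  dg_t = \Big(-\alpha g_t(2\norm{b} - g_t) + \varepsilon_t^2(\alpha^2+1)(\norm{b} - g_t)\Big)\,dt - dM_t, \qquad d\bracket{M}_t = \varepsilon_t^2(\alpha^2+1)\,g_t(2\norm{b} - g_t)\,dt,
\end{align*}
where $M$ is the martingale appearing in~\eqref{eq:mub}. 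The deterministic part carries a strong restoring term $-\alpha g_t(2\norm{b}-g_t)$ vanishing exactly at the two equilibria $g=0$ ($\bmu = b/\norm{b}$) and $g = 2\norm{b}$ ($\bmu = -b/\norm{b}$), while both the forcing $\varepsilon_t^2(\alpha^2+1)(\norm{b}-g_t)$ and the noise shrink with $\varepsilon_t$.

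Under condition (ii) I would invoke the Robbins--Siegmund almost-supermartingale convergence theorem. Since $-\alpha g_t(2\norm{b}-g_t) \le 0$, the drift splits as $b_t - c_t$ with $b_t = \varepsilon_t^2(\alpha^2+1)\norm{b}$ and $c_t = \alpha g_t(2\norm{b}-g_t) + \varepsilon_t^2(\alpha^2+1)g_t \ge 0$, where $\int_0^\infty b_t\,dt < \infty$ and $\bracket{M}_\infty \le (\alpha^2+1)\norm{b}^2\int_0^\infty \varepsilon_t^2\,dt < \infty$. The theorem then gives that $g_t$ converges a.s. to a finite limit $g_\infty$ and that $\int_0^\infty g_t(2\norm{b}-g_t)\,dt < \infty$ a.s.; combined with $g_t \to g_\infty$ this forces $g_\infty \in \{0, 2\norm{b}\}$.

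It then remains to exclude $g_\infty = 2\norm{b}$, i.e. $\bmu_t \to -b/\norm{b}$, and I would exploit the repulsive nature of this equilibrium via the change of variable $\phi_t = -\log(\norm{b} + m_t)$. The key algebraic simplification is $m_t + \tfrac12 g_t = \tfrac12(\norm{b}+m_t)$, which collapses the two $(\norm{b}+m_t)^{-1}$ contributions coming from the drift and the Itô correction and reduces the drift of $\phi_t$ to $-\alpha g_t + \tfrac12\varepsilon_t^2(\alpha^2+1)$, so that
\begin{align*}
  \phi_t = \phi_0 + \int_0^t\Big(-\alpha g_s + \tfrac12\,\varepsilon_s^2(\alpha^2+1)\Big)\,ds - \int_0^t\frac{dM_s}{\norm{b}+m_s}.
\end{align*}
On the event $\{g_t \to 2\norm{b}\}$ the drift is eventually $\le -\alpha\norm{b} < 0$, yet $\phi_t \to +\infty$, and I would derive a contradiction by racing this negative linear drift against the martingale term (using the strong law for continuous local martingales, $N_t = o(\bracket{N}_t)$). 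This is the \emph{main obstacle}: the noise itself degenerates at the equilibrium, since its quadratic variation is $\propto g_t(2\norm{b}-g_t) \to 0$, so the usual results stating that noise prevents trapping at unstable points do not apply; the delicate point is precisely to control the blow-up of $\int_0^t (\norm{b}+m_s)^{-1}\,dM_s$, whose quadratic variation $\int_0^t \varepsilon_s^2(\alpha^2+1)\,g_s/(\norm{b}+m_s)\,ds$ may diverge on this event.

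Under condition (i) the forcing $b_t$ is no longer integrable, so the almost-supermartingale argument fails and the persistent noise ($\int_0^\infty\varepsilon_t^2 = \infty$) must be used instead. Here I would first run the same $\phi_t$-repulsion estimate to confine the trajectory, for large $t$, to a region where $\norm{b}+m_t$ is bounded away from $0$; there $g_t(2\norm{b}-g_t) \ge \delta\, g_t$ for some $\delta>0$ and the drift obeys the genuinely contractive bound $-\alpha\delta\, g_t + \varepsilon_t^2(\alpha^2+1)\norm{b}$. A Grönwall/comparison estimate against $\int_0^t e^{-\alpha\delta(t-s)}\varepsilon_s^2\,ds \to 0$ then yields decay of $g_t$, which I would upgrade to almost-sure convergence by controlling the fluctuations of $g_t$ over unit time-blocks with Doob's inequality and Borel--Cantelli, the summability of the block contributions being exactly what $\int_0^\infty\varepsilon_t^4\,dt < \infty$ provides. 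The technical heart of this case is the coupling between escaping $-b/\norm{b}$ and this confinement: one must rule out not merely convergence to the unstable point but also excursions with $\liminf_t m_t = -\norm{b}$, which again hinges on the interplay between the vanishing noise, the non-integrability of $\varepsilon_t^2$, and the integrability of $\varepsilon_t^4$.
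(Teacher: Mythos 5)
Your reduction to the scalar gap $g_t=\norm{b}-\bmu_t\cdot b$ is correct, the Robbins--Siegmund step under condition (ii) is sound and does yield $g_\infty\in\{0,2\norm{b}\}$ together with $\int_0^\infty g_t(2\norm{b}-g_t)\,dt<\infty$, and the cancellation producing the drift $-\alpha g_t+\tfrac12\varepsilon_t^2(\alpha^2+1)$ for $\phi_t=-\log(\norm{b}+\bmu_t\cdot b)$ is a genuinely nice observation. However, the two steps you yourself flag as ``the main obstacle'' and ``the technical heart'' are left as plans, and they are exactly the hard parts, so the proposal has real gaps. (a) The exclusion of $g_\infty=2\norm{b}$ is not closed: on that event the martingale $N_t=\int_0^t(\norm{b}+m_s)^{-1}dM_s$ has quadratic variation $(\alpha^2+1)\int_0^t\varepsilon_s^2 g_s(\norm{b}+m_s)^{-1}ds$, which may diverge superlinearly precisely because $(\norm{b}+m_s)^{-1}=\expp{\phi_s}$ blows up; the strong law $N_t=o(\bracket{N}_t)$ then feeds $\bracket{N}_t$ back into the bound on $\phi_s$ and you are left with a self-referential inequality that you do not resolve. (b) Case (i) is circular as sketched: the confinement away from $-b/\norm{b}$ rests on (a), and the moment bounds $\E[(\norm{b}-\bmu_u\cdot b)^p]=O(\varepsilon_u^{2p})$ that would make the Borel--Cantelli step work are, in this paper, consequences of Proposition~\ref{prop:rate}, which \emph{assumes} the almost sure convergence you are trying to establish; a direct a priori moment bound would itself have to handle the degenerate region near $-b$.

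For comparison, the paper's proof takes a different route that sidesteps both difficulties. Step~1 proves that the martingale part $M_t$ of $\bmu_t\cdot b$ converges a.s.\ by showing $\sup_t\E[M_t^2]<\infty$: this is immediate under (ii), and under (i) it follows from integrating the mean ODE for $\E[\bmu_t\cdot b]$ and an integration by parts, which is exactly where $\int_0^\infty\varepsilon_t^4\,dt<\infty$ enters --- a mechanism entirely absent from your proposal. Step~2 then studies $X_t=\bmu_t\cdot b-(M_\infty-M_t)$, which solves a pathwise ODE differing from $\bmu_t\cdot b$ by a vanishing quantity, and concludes with a three-zone analysis on the sphere. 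In particular, escaping $-b$ costs nothing stochastic: the Stratonovich correction term $\varepsilon_t^2(\alpha^2+1)\bmu_t\cdot b$ supplies a deterministic drift bounded below by $(\alpha^2+1)\varepsilon_t^2\delta_1>0$ on the cap $\{\bmu\cdot b\le-\delta_1\}$, so no martingale race is needed. If you want to keep your framework, the cleanest repair is to import that Step~1: once $M_\infty-M_t\to0$ a.s., your unresolved repulsion argument at $-b$ and the whole of case (i) reduce to deterministic ODE comparisons.
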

The proof of this result heavily relies on a bespoke version of the ODE method, which
aims at relating the behaviour of the SDE with the one of the ODE obtained by an averaging
process.  A general theory for the ODE method was developed in different
frameworks by \cite{BMP90,benaim96,KY03}. For more results on the stability of SDE, we refer to
\cite{khas11}.

\begin{proof}
  As $\norm{\bmu_t} = 1$, the result is equivalent to $\bmu_t \cdot b \to \norm{b}$ a.s.
  \\
  \textbf{Step~1.} We aim at proving that the martingale part of $(\bmu_t \cdot b)_t$
  (see~\eqref{eq:mub}) converges a.s. Consider
  \begin{align*}
    M_t = \int_0^t \varepsilon_s \big( -L(\bmu_s) b + \alpha ((\bmu_s \cdot b) \bmu_s -b)
  \big) \cdot dW_s.
  \end{align*}
  It is sufficient to prove that $\sup_t \E[M_t^2] < \infty$ to ensure the a.s.
  convergence of $M_t$.
  \begin{align}
    \label{eq:Msquare}
    \E[M_t^2] = \int_0^t \varepsilon_s^2 (\alpha^2 + 1) \E[\norm{b}^2 - (\bmu_s \cdot
    b)^2] ds.
  \end{align}
  \emph{Case \eqref{hyp:L2}.} It is obvious that $\sup_t \E[M_t^2] < \infty$. \\
  \emph{Case \eqref{hyp:L4}.} From \eqref{eq:mub}, we can deduce that
  \begin{align*}
    \E[\bmu_t \cdot b]' = \alpha \E[\norm{b}^2 - (\bmu_t \cdot b)^2] - \varepsilon_t^2
    (\alpha^2 + 1)  \E[\bmu_t \cdot b] \\
    \left(  \E[\bmu_t \cdot b] \expp{\int_0^t (\alpha^2 + 1) \varepsilon_s^2 ds} \right)'
    = \alpha \E[\norm{b}^2 - (\bmu_t \cdot b)^2] \expp{\int_0^t (\alpha^2 + 1)
      \varepsilon_s^2 ds}.
  \end{align*}
  Hence, as $\bmu_t \cdot b$ is bounded, there exists $\kappa >0$ such that for all $t \ge
  0$
  \begin{align}
    \label{eq:bound}
    \int_0^t \E[\norm{b}^2 - (\bmu_s \cdot b)^2] \expp{\int_0^s (\alpha^2 + 1)
        \varepsilon_u^2 du} ds  \le \kappa  \expp{\int_0^t (\alpha^2 + 1) \varepsilon_s^2
        ds}.
  \end{align}
  Combining \eqref{eq:Msquare} and~\eqref{eq:bound}, we obtain using an integration by parts
  formula that
  \begin{align*}
    \E[M_t^2] & = \int_0^t \varepsilon_s^2 (\alpha^2 + 1) \expp{-\int_0^s (\alpha^2 + 1)
        \varepsilon_u^2 du} \E[\norm{b}^2 - (\bmu_s \cdot
    b)^2] \expp{\int_0^s (\alpha^2 + 1)
        \varepsilon_u^2 du} ds \\
      & \le \kappa \varepsilon_t^2 (\alpha^2 + 1) + \kappa \int_0^t \left( \varepsilon_s^2 
        (\alpha^2 + 1) \right)^2 - 2 \varepsilon_s' \varepsilon_s (\alpha^2 + 1) ds \\
      & \le \kappa \varepsilon_0^2 (\alpha^2 + 1) + \kappa \int_0^t \left( \varepsilon_s^2 
        (\alpha^2 + 1) \right)^2 ds.
  \end{align*}
  Considering the Assumption~\eqref{hyp:L4} satisfied by $(\varepsilon_t)_t$, $\sup_t
  \E[M_t^2] < \infty$. \\

  In the end, when either~\eqref{hyp:L4} or \eqref{hyp:L2} holds, the boundedness of
  $\E[M_t^2]$ ensures the a.s. convergence of the martingale $M$ to some square integrable
  random variable $M_\infty$ satisfying $M_t = \E[M_\infty | \cf_t]$. \\

  \textbf{Step~2.} Define $X_t = \bmu_t \cdot b - (M_\infty - M_t)$. The process $X$
  solves the classical differential equation
  \begin{align}
    \label{eq:X}
    dX_t & = -  \left\{ \alpha \left( (\bmu_t \cdot b)^2 -
      \norm{b}^2 \right) + \varepsilon_t^2 (\alpha^2 +1) \bmu_t \cdot b \right\} dt.
  \end{align}
  Let $\eta >0$, there exists $T >0$ s.t. for all $t \ge T$, $\varepsilon_t^2 (\alpha^2 +
  1) \norm{b} \le \eta$ and $\abs{\bmu_t \cdot b - X_t} \le \eta$.

  \begin{figure}[ht]
    \begin{center}
      \begin{tikzpicture}
        \filldraw[ball color=white] (0,0) circle (2.5cm);
        \RotA{0}{2.5}{0}
        \RotB{0}{-2.5}{0}
        \draw [style=densely dashed,line width=0.05cm] (\locXa,\locYa) -- (\locXb,\locYb);
        \RotA{0}{2.5}{0}
        \RotB{0}{3}{0}
        \draw [line width=0.05cm,->] (\locXa,\locYa) -- (\locXb,\locYb);
        \draw ({\locXb+0.5},\locYb) node {$b$};
        \RotA{0}{-2.5}{0}
        \RotB{0}{-2.65}{0}
        \draw [style=densely dotted,line width=0.05cm] (\locXa,\locYa) -- (\locXb,\locYb);
        \RotA{0}{-2.65}{0}
        \RotB{0}{-3}{0}
        \draw [line width=0.05cm] (\locXa,\locYa) -- (\locXb,\locYb);
        \RotA{0}{0}{0}
        \RotB{0}{-2.5}{0}
        \draw [->,color=green,line width=0.05cm] (\locXa,\locYa) -- (\locXb,\locYb);
        \draw (\locXb-0.4,\locYb-0.3)  [color=green] node {$\mu_0$};

        \RotB{0}{-2.25}{0}
        \draw (\locXb,\locYb) [rotate around={-20:(\locXb,\locYb)}] node {$+$};
        \draw (\locXb+0.3,\locYb-0.2) [rotate around={-20:(\locXb,\locYb)}] node {$\; -
          \delta_2$};
        \draw [color=red,rotate around={-20:(\locXb,\locYb)}] (\locXb,\locYb) ellipse (1.3 and 0.15);
        \RotB{0}{-1.75}{0}
        \draw [color=red,rotate around={-20:(\locXb,\locYb)}] (\locXb,\locYb) ellipse
        (1.86 and 0.2);
        \draw (\locXb,\locYb) [rotate around={-20:(\locXb,\locYb)}] node {$+$};
        \draw (\locXb+0.3,\locYb) [rotate around={-20:(\locXb,\locYb)}] node {$\; -
          \delta_1$};
        \RotB{0}{2.25}{0}
        \draw (\locXb,\locYb) [rotate around={-20:(\locXb,\locYb)}] node {$+$};
        \draw (\locXb+0.3,\locYb-0.2) [rotate around={-20:(\locXb,\locYb)}] node {$\; 
          \delta_2$};
        \draw [color=red,rotate around={-20:(\locXb,\locYb)}] (\locXb,\locYb) ellipse (1.3 and 0.15);
      \end{tikzpicture}
      \caption{We consider three zones on the sphere $\{\bmu \cdot b \le -\delta_1\}$,
        $\{-\delta_2 \le \bmu \cdot b \le \delta_2\}$ and $\{\bmu \cdot b \ge
          \delta_2\}$.}\label{fig:sphere}
    \end{center}
  \end{figure}
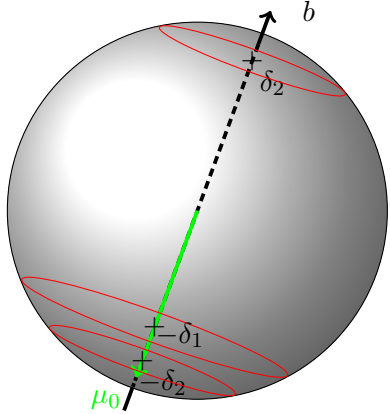

  Let $0 < \delta_1 < \delta_2 < \norm{b}$. We can choose $\eta$ small enough such that
  $\delta_1 < \delta_2 - 2 \eta$ and $\eta \le \frac{\alpha}{2} (\norm{b}^2 - \delta_2^2)$.
  Figure~\ref{fig:sphere} defines three regions: the two pole caps and the region in
  between; depending on the position of $\bmu_t \cdot b$, we can bound from below the
  r.h.s of~\eqref{eq:X} to deduce that for $t \ge s > T$.
  \begin{align*}
    X_t - X_s  \ge &  \int_s^t (\alpha^2 + 1) \varepsilon_u^2 \delta_1 \ind{\bmu_u \cdot b
      \le - \delta_1} du + \int_s^t \frac{\alpha}{2} (\norm{b}^2 - \delta_2^2) \ind{
      -\delta_2
      \le \bmu_u \cdot b \le \delta_2} du \nonumber \\
    & \quad -\int_s^t  \norm{b} \varepsilon_u^2 (\alpha^2+1) \ind{\bmu_u \cdot b >
      \delta_2} du, \\
    X_t - X_s  \ge &  \int_s^t (\alpha^2 + 1) \varepsilon_u^2 \delta_1 \ind{X_u
      \le - \delta_1 - \eta} du + \int_s^t \frac{\alpha}{2} (\norm{b}^2 - \delta_2^2) \ind{
      -\delta_2 + \eta
      \le X_u \le \delta_2 - \eta }  du \nonumber \\
    & \quad -\int_s^t  \norm{b} \varepsilon_u^2 (\alpha^2+1) \ind{X_u >
      \delta_2 + \eta} du.
  \end{align*}
  Note that $X_t$ is increasing on the set $\{u \; : \; X_u  \le \delta_2 - \eta\}$.  We
  can choose $\delta_2$ sufficiently close to $\norm{b}$ such that there exists $t_1$ for
  which $\int_s^{t_1} (\alpha^2 + 1) \varepsilon_u^2 \delta_1 = \norm{b} - (\delta_2 -
  \eta)$ --- remember that $\eta$ can be chosen as small as necessary. Hence, for all $t
  \ge t_1$,  $X_{t} \ge -\delta_2 + \eta$. Therefore,
  \begin{align*}
    X_t - X_{t_1}  \ge &  \int_{t_1}^t \frac{\alpha}{2} (\norm{b}^2 - \delta_2^2) \ind{
      -\delta_2 + \eta \le X_u \le \delta_2 - \eta }  du
    -\int_{t_1}^t  \norm{b} \varepsilon_u^2 (\alpha^2+1) \ind{X_u \ge \delta_2 + \eta} du.
  \end{align*}
  From the continuity of $X$, we deduce that there exists $t_2 \ge t_1$ such that for all
  $t \ge t_2$, $X_t \ge \delta_2 - \eta$, which implies that for all $t \ge t_2$, $\bmu_t
  \cdot b \ge \delta_2 - 2 \eta$. By choosing $\delta$ close to $\norm{b}$ and $\eta$
  close to $0$, we find that $\bmu_t \cdot b \to \norm{b}$.
\end{proof}

\begin{prop}
  \label{prop:rate}
  Assume $\bmu_t \to b / \norm{b}$ a.s. If 
  $(\varepsilon_t)_t$ is of class $C^1$ and $\lim_{t \to \infty}
  \frac{\varepsilon_t'}{\varepsilon_t} = 0$, 
  then, for all $p \in \N$,
  $$
  \lim_{t \to \infty} \E\left[\norm{\frac{b}{\norm{b}} - \bmu_t}^{2p} \varepsilon_t^{-2p}\right]
  = \left(\frac{\alpha^2 +1}{\alpha \norm{b}}\right)^{p} p!.
  $$
\end{prop}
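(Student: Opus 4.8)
\emph{Plan.} Write $Z_t = \norm{b} - \bmu_t\cdot b$ and $U_t = Z_t\,\varepsilon_t^{-2}$. Since $\bmu_t$ and $b/\norm{b}$ are unit vectors, $\norm{\frac{b}{\norm{b}}-\bmu_t}^2 = \frac{2}{\norm{b}}Z_t$, so the statement is equivalent to $\E[U_t^p]\to\left(\frac{\alpha^2+1}{2\alpha}\right)^p p!$ for every $p\in\N$. I would obtain a closed ODE hierarchy for the moments $m_p(t)=\E[Z_t^p]$, renormalise at the scale $\varepsilon_t^{2p}$, and pass to the limit by induction on $p$. Applying It\^o's formula to $Z_t^p$ with~\eqref{eq:mub}, the quadratic variation $d\langle\bmu\cdot b\rangle_t = \varepsilon_t^2(\alpha^2+1)Z_t(2\norm{b}-Z_t)\,dt$ recorded just below it, and the identity $-\alpha((\bmu_t\cdot b)^2-\norm{b}^2) = -\alpha Z_t(2\norm{b}-Z_t)$, then taking expectations (the martingale part has zero mean as $\bmu$ is bounded), yields for each $p\ge1$ a linear relation between $m_{p-1},m_p,m_{p+1}$ and $m_p'$. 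Writing $a_p(t)=m_p(t)\,\varepsilon_t^{-2p}=\E[U_t^p]$ and using $m_p'=\varepsilon_t^{2p}(a_p'+2p\tfrac{\varepsilon_t'}{\varepsilon_t}a_p)$, this takes the form
\begin{equation*}
  a_p'(t) = -\gamma_p(t)\,a_p(t) + p^2(\alpha^2+1)\norm{b}\,a_{p-1}(t) + \alpha p\,\varepsilon_t^2\,a_{p+1}(t),
\end{equation*}
with $\gamma_p(t) = 2p\alpha\norm{b} + 2p\tfrac{\varepsilon_t'}{\varepsilon_t} + \tfrac{p(p+1)}{2}(\alpha^2+1)\varepsilon_t^2$.

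The hypotheses $\varepsilon_t\to0$ and $\varepsilon_t'/\varepsilon_t\to0$ give $\gamma_p(t)\to 2p\alpha\norm{b}>0$. Setting $a_p'=0$ in the formal limit produces the recursion $c_p = \frac{p(\alpha^2+1)}{2\alpha}c_{p-1}$ with $c_0=1$, that is $c_p = \left(\frac{\alpha^2+1}{2\alpha}\right)^p p!$, precisely the announced constants — which are the moments of an exponential law of mean $(\alpha^2+1)/(2\alpha)$. The convergence $a_p(t)\to c_p$ then follows by induction: assuming $a_{p-1}(t)\to c_{p-1}$, the displayed identity is a scalar linear ODE with rate $\gamma_p(t)\to 2p\alpha\norm{b}>0$ and forcing $g_p(t)= p^2(\alpha^2+1)\norm{b}\,a_{p-1}(t)+\alpha p\,\varepsilon_t^2 a_{p+1}(t)$, and a standard variation-of-constants argument gives $a_p(t)\to g_p^\infty/\gamma_p^\infty=c_p$, \emph{provided} the up-coupling remainder $\varepsilon_t^2 a_{p+1}(t)\to0$.

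The crux, and the step I expect to be hardest, is exactly this remainder $\varepsilon_t^2 a_{p+1}(t)=m_{p+1}(t)\,\varepsilon_t^{-2p}$, i.e. the uniform moment bound $\sup_t\E[U_t^{p+1}]<\infty$ (equivalently $m_{p+1}(t)=o(\varepsilon_t^{2p})$). A naive global Lyapunov bound fails: the drift of $Z_t$ is $-\alpha Z_t(2\norm{b}-Z_t)+\varepsilon_t^2(\alpha^2+1)(\norm{b}-Z_t)$, which loses its restoring character near the antipodal unstable equilibrium $Z_t=2\norm{b}$, so that bounding $m_{q+1}$ crudely by $2\norm{b}\,m_q$ lets the destabilising term $+\alpha Z_t^2$ cancel the confinement and leaves a coefficient whose integral is $-2q\log\varepsilon_t\to\infty$. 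I would therefore localise near the stable equilibrium: for $\rho<\norm{b}$ one has $-\alpha Z_t(2\norm{b}-Z_t)\le-\alpha\norm{b}Z_t$ on $\{Z_t\le\rho\}$, and the splitting $m_{q+1}\le\rho\,m_q+(2\norm{b})^{q+1}\P(Z_t>\rho)$ turns each moment equation into one with a strictly positive constant rate $q\alpha(2\norm{b}-\rho)$ plus a forcing involving $\P(Z_t>\rho)$; variation of constants then yields $m_q(t)\lesssim \varepsilon_t^{2q}+\int_T^t e^{-q\alpha(2\norm{b}-\rho)(t-s)}\P(Z_s>\rho)\,ds$.

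This reduces the whole matter to showing that the escape probability $\P(Z_t>\rho)$ decays faster than every power of $\varepsilon_t$. I expect this to be the genuine obstacle, requiring an exponential (Freidlin--Wentzell-type) estimate for the time-inhomogeneous degenerate diffusion $Z_t$, built from an exponential supermartingale and exploiting the \emph{given} almost-sure convergence $\bmu_t\to b/\norm{b}$: since the noise acting on $Z_t$ has amplitude $O(\varepsilon_t)$ while the drift is confining on $\{Z_t\le\rho\}$, one anticipates $\P(Z_t>\rho)\le e^{-c/\varepsilon_t^2}$. A purely polynomial (Chebyshev) bootstrap is provably insufficient here, because bounding $\P(Z_t>\rho)$ by a moment $m_q/\rho^q$ and re-injecting it leaves the decay exponent stalled at the level of the best available bound, so an independent exponential control of the escape is unavoidable.
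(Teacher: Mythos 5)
Your setup reproduces the paper's second step almost exactly: with $Z_t=\norm{b}-\bmu_t\cdot b$, the moment hierarchy, the renormalisation $a_p(t)=\E[Z_t^p]\varepsilon_t^{-2p}$, the rate $2p\alpha\norm{b}$, and the limiting constants $\bigl(\frac{\alpha^2+1}{2\alpha}\bigr)^p p!$ all agree with the paper (your $\gamma_p$ and forcing match its equation for $\E[Z_t^p]'$ after absorbing $\E[Y_t^{p-1}]=\varepsilon_t^2 a_p(t)$ into the rate). You have also correctly isolated the only nontrivial ingredient: the up-coupling remainder $\varepsilon_t^2 a_{p+1}(t)=\E\bigl[(\norm{b}-\bmu_t\cdot b)^{p+1}\varepsilon_t^{-2p}\bigr]$ must tend to $0$. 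But at precisely that point your argument stops being a proof. You reduce the claim to the tail estimate $\P(Z_t>\rho)=o(\varepsilon_t^{2p})$ for every $p$, announce that it should follow from a Freidlin--Wentzell-type exponential supermartingale bound for a time-inhomogeneous degenerate diffusion, and do not prove it. The hardest step of your plan is left as a conjecture, so the proposal is incomplete.

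Moreover, the detour through escape probabilities is unnecessary, and the claim that a polynomial argument is \emph{provably insufficient} is not correct. The paper's Step~1 establishes $\E[Y^p_t]\to 0$, where $Y^p_t=(\norm{b}-\bmu_t\cdot b)^{p+1}\varepsilon_t^{-2p}$ is exactly your remainder, by a second induction on $p$ run in parallel with the main one. The idea you are missing is that $Y^p$ satisfies its own closed equation whose drift coefficient is $-(p+1)\alpha(\bmu_t\cdot b+\norm{b})-2p\varepsilon_t'/\varepsilon_t$ plus a forcing proportional to $Y^{p-1}$; the loss of confinement near the antipode that worries you appears only through the factor $\bmu_t\cdot b$, which may be negative. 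The paper keeps only the guaranteed half of the rate in the integrating factor $F(t)=2p\log\varepsilon_t+(p+1)\alpha\norm{b}\,t$ and pushes the sign-indefinite leftover $-(p+1)\alpha\,\E[Y^p_s\,\bmu_s\cdot b]$ into the limit, where Lemma~\ref{lem:liminfexp} gives $\limsup_t\E[Y^p_t]\le-\norm{b}^{-1}\liminf_t\E[Y^p_t\,\bmu_t\cdot b]$ and Fatou's lemma, combined with the standing hypothesis $\bmu_t\to b/\norm{b}$ a.s.\ (which makes $\bmu_t\cdot b$ eventually positive pathwise), forces this to be $\le 0$. Note that this is where the almost-sure convergence assumed in the proposition is actually consumed; in your plan it is never used. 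If you wish to keep your route you must genuinely prove the exponential escape estimate, which is substantially harder than the elementary induction above; otherwise, replace your third and fourth paragraphs by the induction on $\E[Y^p_t]$.
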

 Note that as $\norm{\bmu_t} = 1$, $\norm{\frac{b}{\norm{b}} - \bmu_t}^2 =
 \frac{2}{\norm{b}} (\norm{b} - \bmu_t \cdot b)$. Hence, the proposition could
 equivalently write  $\lim_{t \to \infty} \E[(\norm{b} - \bmu_t \cdot b)^p
 \varepsilon_t^{-2p}] = \left(\frac{\alpha^2 +1}{2 \alpha}\right)^{p} p!$. For $p=1$, this
 boils down to $\lim_{t \to \infty} \E[(\norm{b} - \bmu_t \cdot b)
 \varepsilon_t^{-2}] = \frac{\alpha^2 +1}{2 \alpha}$. In the case of the rescaled Itô
 model, the convergence rate was given by $h(t)$, which actually monitors the magnitude of
 the noise. In the Stratonovich model, this role is played by $\varepsilon_t$. Hence, we
 would have expected a convergence rate of $\varepsilon_t^{-1}$ whereas we obtained a
 much faster one, namely the square of it ---  $\varepsilon_t^{-2}$. Although in both models,
 the magnetic moment converges to $b$, the rates governing the convergence significantly
 differ.

\begin{proof}
  \textbf{Step 1}. Consider the process $Y^{p}$ defined by $Y^{p}_t = (\norm{b} - \bmu_t
  \cdot b)^{p+1} \varepsilon_t^{-2p}$. We aim at proving that, for all $p \in \N$,
  $\E[Y^p_t] \to 0$.  For $p = 0$, the result follows from the almost sure convergence of
  $\bmu_t \cdot b$ combined with the bounded convergence theorem. Now, we prove the
  statement by induction for $p \ge 1$. Assume the convergence holds true for $p - 1$.
  Applying the Itô formula to $Y^p$ yields.
  \begin{align*}
    dY^p_t & = -(p+1) \varepsilon_t^{-2p} (\norm{b} - \bmu_t \cdot b)^p d(\bmu_t \cdot b) -
    2 p \varepsilon_t'  \varepsilon_t^{-1} Y_t^p dt + 
    \frac{p(p+1)}{2}  (\norm{b} - \bmu_t \cdot b)^{p-1} \varepsilon_t^{-2 p}
    d\bracket{\bmu \cdot b}_t \\
    &= (p + 1) \varepsilon_t^{-2 p} (\norm{b} - \bmu_t \cdot b)^p \left\{ \alpha \left( (\bmu_t \cdot b)^2 -
        \norm{b}^2 \right) + \varepsilon_t^2 (\alpha^2 +1) \bmu_t \cdot b \right\} dt - 2p
    \varepsilon_t' \varepsilon_t^{-1} Y^p_t dt \\
    & \quad + \frac{p(p+1)}{2} (\alpha^2 + 1) Y_t^{p-1} (\norm{b} + \bmu_t \cdot b) dt + \dots dW_t \\
    &= \left\{ - (p+1) \alpha Y^p_t ( (\bmu_t \cdot b) + \norm{b} )  - 2 p
      \varepsilon_t' \varepsilon_t^{-1} Y_t^p  \right\} dt  \\
    & \quad + (p + 1) (\alpha^2 +1) Y_t^{p-1} \left(\frac{p+2}{2} \bmu_t \cdot b + \frac{p}{2}
      \norm{b} \right) dt+ \dots dW_t.
  \end{align*}
  Introduce the function $F(t) = 2 p \log \varepsilon_t + (p + 1) \alpha \norm{b} t$. As
  $\lim_{t \to \infty} \varepsilon_t' / \varepsilon_t = 0$, $\log \varepsilon_t = o(t)$
  when $t \to \infty$ and therefore $F(t) \sim (p + 1) \alpha \norm{b} t$.  Then, we
  integrate the above differential equation to obtain
  \begin{align*}
    \E[Y^p_t] - \E[Y_0^p] \expp{F(0) - F(t)}
    &= \expp{-F(t)} \int_0^t \expp{F(s)} (p + 1) (\alpha^2 +1) \E\left[Y_s^{p-1}
      \left(\frac{p+2}{2} \bmu_s \cdot b + \frac{p}{2} \norm{b} \right] \right)ds \\
    & -  \expp{-F(t)} \int_0^t\expp{F(s)}  (p+1) \alpha \E[Y^p_s  (\bmu_s \cdot b)] ds.
  \end{align*}
  The result for $p-1$ combined with Lemma~\ref{lem:expint} yields that the first
  term on the r.h.s tends to $0$ and we are left with
  \begin{align*}
    \limsup_t \E[Y^p_t] = 
    - \liminf \expp{-F(t)} \int_0^t\expp{F(s)}  (p+1) \alpha \E[Y^p_s  (\bmu_s \cdot b)] ds.
  \end{align*}
  From Lemma~\ref{lem:liminfexp}, we deduce
  \begin{align*}
    \limsup_t \E[Y^p_t] \le - \liminf \frac{1}{\norm{b}} \E[Y^p_t  (\bmu_t \cdot b)] \le 0
  \end{align*}
  where the last inequality comes from Fatou's lemma. Then, we conclude that
  \begin{align}
    \label{eq:limsup}
    \text{for all } p \ge 0, \; \lim_t \E[Y^p_t] = 0.
  \end{align}

  \textbf{Step 2}.  Define $Z^p_t = (\norm{b} - \bmu_t \cdot b)^p \varepsilon_t^{-2 p}$.
  We aim at proving that $\lim_{t \to \infty} \E[Z^p_t] = \left(
  \frac{\alpha^2+1}{2\alpha} \right)^p p!$ by induction. The result is obvious for $p=0$
  using the a.s. convergence of $\bmu_t$ to $b$.
  Assume $p \ge 1$ in the following.
  \begin{align*}
    dZ^p_t & = -p \varepsilon_t^{-2p} (\norm{b} - \bmu_t \cdot b)^{p - 1} d(\bmu_t \cdot b) -
    2 p \varepsilon_t'  \varepsilon_t^{-1} Z_t^p dt + 
    \frac{p(p-1)}{2}  (\norm{b} - \bmu_t \cdot b)^{p -2} \varepsilon_t^{-2 p}
    d\bracket{\bmu \cdot b}_t \\
    &= p \varepsilon_t^{-2 p} (\norm{b} - \bmu_t \cdot b)^{p - 1}\left\{ \alpha \left( (\bmu_t \cdot b)^2 -
        \norm{b}^2 \right) + \varepsilon_t^2 (\alpha^2 +1) \bmu_t \cdot b \right\} dt - 2
    p \varepsilon_t' \varepsilon_t^{-1} Z^p_t dt \\
    & \quad + \frac{p(p-1)}{2} (\alpha^2 + 1) Z_t^{p-1} (\norm{b} + \bmu_t \cdot b) dt + \dots dW_t \\
    &= \left\{ - p \alpha Z^p_t ( (\bmu_t \cdot b) + \norm{b} )  - 2 p
      \varepsilon_t' \varepsilon_t^{-1} Z_t^p 
      + p (\alpha^2 +1) Z_t^{p-1} \left(\bmu_t \cdot b + \frac{p-1}{2} (\norm{b} + \bmu_t
        \cdot b) \right) \right\} dt + \dots dW_t.
  \end{align*}
  If we group terms and take expectation, we obtain
  \begin{align*}
    \E[Z_t^p]' & = \left( -2 p \varepsilon'_t \varepsilon_t^{-1}  -2 p \alpha \norm{b}\right) \E[Z_t^p] + p \alpha
    \E[Z_t^p (\norm{b} - \bmu_t \cdot b)] \\
    & \quad + (\alpha^2 +1 ) p \E\left[Z_t^{p-1} \left( \frac{p+1}{2} (\bmu_t
        \cdot b - \norm{b}) + p \norm{b} \right) \right].
  \end{align*}
  Note that $Z_t^p (\norm{b} - \bmu_t \cdot b) = Y_t^p$.  Then, using the
  function $G(t) = 2 p \log (\varepsilon_t) + 2 p \alpha \norm{b} t$, we can write
  \begin{align*}
    \left(\E[Z_t^p] \expp{G(t)} \right)' & = \left\{ p \alpha
      \E[Y_t^p ]+ (\alpha^2 +1 ) p \E\left[  -\frac{p+1}{2} Y_t^{p-1} + p
        Z_t^{p-1}\norm{b} \right] \right\} \expp{G(t)}.
  \end{align*}
  Integrating this differential equation leads to
  \begin{align}
    \label{eq:Zpt}
    \E[Z_t^p] & - \E[Z_0^p] \expp{G(0) - G(t)}  = \expp{-G(t)} \int_0^t (\alpha^2 +1 ) p^2
    \norm{b} \E\left[ Z_s^{p-1} \right]  \expp{G(s)} ds  \nonumber \\
    & \quad + \expp{-G(t)} \int_0^t p \left\{\alpha
      \E[Y_s^p ]- (\alpha^2 +1 ) \frac{p(p+1)}{2} \E\left[ Y_s^{p-1} \right]\right\} \expp{G(s)} ds.
    \end{align}
  From the first step of the proof of~\eqref{eq:limsup}, the second term on the r.h.s
  of~\eqref{eq:Zpt} tends to $0$ when $t$ goes to infinity.

  Assume the result holds true for $p -1$, ie $\lim_t \E[Z^{p-1}_t] = \left(\frac{a^2 +
  1}{2 \alpha }\right)^{p-1} (p- 1)!$, then
  \begin{align*}
    \lim_{t \to \infty} \E[Z_t^p] &  = \lim_{t \to \infty} \expp{-G(t)} \int_0^t (\alpha^2 +1 ) p^2
    \norm{b} \E\left[ Z_s^{p-1} \right]  \expp{G(s)} ds 
  \end{align*}
  With the help of Lemma~\ref{lem:expint}, we easily conclude that
  \begin{align*}
    \lim_t \E[Z_t^p] = \left(\frac{a^2 + 1}{2 \alpha}\right)^{p} p!.
  \end{align*}
\end{proof}

\begin{prop}
  \label{prop:as-rate}
  Assume that 
  \begin{itemize}
    \item there exists $\gamma > 0$ such that $\displaystyle\int_0^\infty \varepsilon_t^\gamma dt <
      \infty$;
    \item  the function $(\varepsilon_t)_t$ is $C^1$, decreasing for large
      enough $t$ and satisfies $\displaystyle\lim_{t \to \infty} \frac{\varepsilon_t'}{\varepsilon_t}
      = 0$.
  \end{itemize}
  Then, 
  \begin{align*}
    \mbox{for all $\eta >0$, } \norm{\frac{b}{\norm{b}} - \bmu_t}^{2} \varepsilon_t^{-2 +
      \eta} \to 0 \; a.s.
  \end{align*}
\end{prop}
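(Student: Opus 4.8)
The plan is to deduce the almost sure rate from the $L^p$ bounds of Proposition~\ref{prop:rate} by a Borel--Cantelli argument, the only delicate point being the passage from convergence along integer times to convergence of the whole trajectory. Throughout, write $\phi_t = \norm{b} - \bmu_t \cdot b \ge 0$, so that, by the remark following Proposition~\ref{prop:rate}, the claim is equivalent to $\phi_t \varepsilon_t^{-2+\eta} \to 0$ a.s. Since Proposition~\ref{prop:rate}, whose hypotheses hold under the present assumptions, gives a finite limit for every moment, I may use that $C_m := \sup_{t \ge 0} \E[\phi_t^m \varepsilon_t^{-2m}] < \infty$ for all $m \in \N$.

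Fix $\eta > 0$ and pick $p \in \N$ with $p \eta \ge \gamma$. Raising to the power $p$, it suffices to prove $\psi_t \to 0$ a.s., where $\psi_t := \phi_t^p \varepsilon_t^{-2p + p\eta} = (\phi_t \varepsilon_t^{-2+\eta})^p \ge 0$. First I would record the elementary fact that $\sum_{n} \varepsilon_n^{p\eta} < \infty$: since $(\varepsilon_t)_t$ is decreasing for large $t$, one has $\varepsilon_{n+1}^\gamma \le \int_n^{n+1} \varepsilon_s^\gamma\, ds$ for $n$ large, so $\sum_n \varepsilon_n^\gamma \le \int_0^\infty \varepsilon_s^\gamma\, ds < \infty$, and as $\varepsilon_n \to 0$ and $p\eta \ge \gamma$ we have $\varepsilon_n^{p\eta} \le \varepsilon_n^\gamma$ eventually. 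The heart of the proof is then the estimate
\begin{align}
\label{eq:supbound}
\E\Big[ \sup_{t \in [n,n+1]} \psi_t \Big] \le C\, \varepsilon_n^{p\eta},
\end{align}
for a constant $C$ independent of $n$; granting \eqref{eq:supbound}, $\sum_n \E[\sup_{[n,n+1]}\psi_t] < \infty$, hence $\sum_n \sup_{[n,n+1]}\psi_t < \infty$ a.s., so $\sup_{t\in[n,n+1]}\psi_t \to 0$ a.s., which yields $\psi_t \to 0$ a.s.

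To establish \eqref{eq:supbound} I would apply Itô's formula to $\psi_t$, using \eqref{eq:mub} and the quadratic variation $d\bracket{\bmu\cdot b}_t = \varepsilon_t^2 (\alpha^2+1)(\norm{b}^2 - (\bmu_t\cdot b)^2)\,dt$, to write $\psi_t = \psi_n + \int_n^t \beta_s\, ds + \int_n^t \Sigma_s \cdot dW_s$ on $[n,n+1]$. Taking the supremum and then expectations,
\begin{align*}
\E\Big[\sup_{t\in[n,n+1]}\psi_t\Big] \le \E[\psi_n] + \int_n^{n+1}\E[\abs{\beta_s}]\,ds + \E\Big[\sup_{t\in[n,n+1]}\Big| \int_n^t \Sigma_s\cdot dW_s \Big|\Big],
\end{align*}
and I would bound the last term by Burkholder--Davis--Gundy and Cauchy--Schwarz through $\big(\E\int_n^{n+1}\abs{\Sigma_s}^2\,ds\big)^{1/2}$. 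The decisive structural observation is that $\norm{b}^2 - (\bmu_t\cdot b)^2 = \phi_t(\norm{b} + \bmu_t\cdot b) \le 2\norm{b}\,\phi_t$, which supplies one extra power of $\phi_t$; together with $\varepsilon_s \le \varepsilon_n$ on $[n,n+1]$ it reduces every drift contribution to the form $\varepsilon_n^{p\eta}(\phi_s \varepsilon_s^{-2})^{m}$ and the diffusion coefficient to $\abs{\Sigma_s}^2 \le C (\phi_s\varepsilon_s^{-2})^{2p-1}\varepsilon_s^{2p\eta}$. Taking expectations and invoking the uniform moment bounds $C_m$ collapses each contribution to a multiple of $\varepsilon_n^{p\eta}$, giving \eqref{eq:supbound}. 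The boundedness of $\varepsilon_t'/\varepsilon_t$ (a consequence of $\varepsilon_t'/\varepsilon_t \to 0$) is what keeps the drift term produced by the time-dependence of $\varepsilon_t^{-2p+p\eta}$ under control.

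The main obstacle is exactly this fluctuation control, i.e.\ estimate~\eqref{eq:supbound}: a plain Borel--Cantelli argument applied to $\E[\psi_n]$ alone only yields convergence along the integers, and upgrading it to the full trajectory forces the maximal inequality together with the careful term-by-term bookkeeping of the Itô decomposition. Everything else — the integral comparison bounding $\sum_n\varepsilon_n^{p\eta}$, the reduction via the power $p$, and the final Borel--Cantelli step — is routine.
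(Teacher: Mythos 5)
Your proposal is correct and follows essentially the same route as the paper: both reduce the claim to a Borel--Cantelli argument powered by the moment bounds of Proposition~\ref{prop:rate}, use the monotonicity of $(\varepsilon_t)_t$ and the integral comparison to get $\sum_n \varepsilon_n^{p\eta}<\infty$ for $p\eta\ge\gamma$, and control the fluctuations on each interval $[n,n+1]$ via the It\^o decomposition and the Burkholder--Davis--Gundy inequality. The only (cosmetic) difference is that you bound $\E\bigl[\sup_{[n,n+1]}\psi_t\bigr]$ for the full functional $\psi_t=\phi_t^p\varepsilon_t^{-2p+p\eta}$ in one stroke, whereas the paper first gets convergence along integers and separately estimates $\sup_{[n,n+1]}\abs{X_t-X_n}\,\varepsilon_n^{-2+\eta}$ for $X_t=\frac{b}{\norm{b}}-\bmu_t$; the two bookkeeping schemes are equivalent.
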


\begin{proof}
  \textbf{Step~1.} As $(\varepsilon_t)_t$ is decreasing, then for all $t \ge s$ and all
  $\beta \ge 0$, we have $\varepsilon_t^{\gamma + \beta} \le \varepsilon_t^\gamma
  \varepsilon_s^\beta$.  Hence, for all $\beta >0$, $\int_0^\infty \varepsilon_t^{\gamma +
    \beta} dt < \infty$.

  First, we only prove the result for integer values of $t$. From
  Proposition~\ref{prop:rate}, 
  \begin{align*}
    \E\left[\norm{\frac{b}{\norm{b}} - \bmu_t}^{2p} \varepsilon_t^{-2 p +
        \eta p} \right] \ge C_p \; \varepsilon_t^{\eta p}
  \end{align*}
  for all $p \ge 1$ where $C_p >0$ is independent of $t$.  
  \begin{align*}
    \sum_{t = 0}^\infty \E\left[\norm{\frac{b}{\norm{b}} - \bmu_t}^{2p} \varepsilon_t^{-2 p +
        \eta p} \right] \leq \sum_{t = 0}^\infty C_p \; \varepsilon_t^{\eta p} \le
    C_p \int_0^\infty \varepsilon_t^{\eta p} dt.
  \end{align*}
  Choose $p$ such that $\eta p \ge \gamma$ and hence, $\int_0^\infty \varepsilon_t^{\eta p} dt
  < \infty$. Then, for such a $p$,
  \begin{align*}
    \sum_{t = 0}^\infty \E\left[\norm{\frac{b}{\norm{b}} - \bmu_t}^{2p} \varepsilon_t^{-2 p +
        \eta p} \right]  = 
    \E\left[\sum_{t = 0}^\infty \norm{\frac{b}{\norm{b}} - \bmu_t}^{2p} \varepsilon_t^{-2 p +
        \eta p} \right]  < \infty.
  \end{align*}
  Consequently, $\sum_{t = 0}^\infty \norm{\frac{b}{\norm{b}} - \bmu_t}^{2p}
  \varepsilon_t^{-2 p + \eta p} < \infty$ a.s. finite and therefore 
  \begin{equation}
    \label{eq:lim_integer}
    \lim_{t \in \N, t
    \to \infty} \norm{\frac{b}{\norm{b}} - \bmu_t}^{2} \varepsilon_t^{-2  + \eta } = 0 \;
    a.s.
  \end{equation}
  This standard reasoning relies on the Borel--Cantelli lemma, which cannot be applied
  readily for a family of continuous time events.  Extending this result to $t \in \R_+$
  requires to monitor the behaviour in $L^p(\Omega)$ of $ \norm{\frac{b}{\norm{b}} - \bmu_t}^{2}
  \varepsilon_t^{-2} $ for $t \in [n, n+1]$ for any $n \in \N$. \\

  \textbf{Step~2.} We aim at proving that $\lim_{n \to \infty} \sup_{n \le t \le n+1}
  \abs{\norm{\frac{b}{\norm{b}} - \bmu_t}^{2} \varepsilon_t^{-2 + \eta} -
  \norm{\frac{b}{\norm{b}} - \bmu_n}^{2} \varepsilon_n^{-2 + \eta}  } \to 0$ a.s. Define
  $X_t = \frac{b}{\norm{b}} - \bmu_t$. Let $n \in \N$ and $n \le t \le n+1$
  \begin{align*}
   \abs{\norm{X_t}^{2} \varepsilon_t^{-2 + \eta} -
   \norm{X_n}^{2} \varepsilon_n^{-2 + \eta}} & \le 
   C  \abs{\norm{X_n}^{2} (\varepsilon_n^{-2 + \eta} -  \varepsilon_t^{-2 + \eta})} + C 
   \abs{(\norm{X_n}^{2} - \norm{X_t}^2) \varepsilon_t^{-2 + \eta}}  \\
   & \le \norm{X_n}^{2} \varepsilon_n^{-2 + \eta} +  
   \abs{(\norm{X_n}^{2} - \norm{X_t}^2) \varepsilon_t^{-2 + \eta}} \\
   & \le \norm{X_n}^{2} \varepsilon_n^{-2 + \eta} +  
   \abs{(\norm{X_n}^{2} - \norm{X_t}^2) \varepsilon_n^{-2 + \eta}}.
  \end{align*}
  As we know from~\eqref{eq:lim_integer} that $\norm{X_n}^{2} \varepsilon_n^{-2 + \eta}
  \to 0$, it is sufficient to monitor $\sup_{n \le t \le n+1} \abs{\norm{X_n}^{2} -
  \norm{X_t}^2} \varepsilon_n^{-2 + \eta}.$ It is acutally enough to consider $\sup_{n \le
  t \le n+1} \norm{X_n-X_t} \varepsilon_n^{-2 + \eta}$ as $\norm{X_t} \le 2$. 

  Let $p > 1$.
  \begin{align*}
   & \E\left[ \sup_{n \le t \le n+1} \abs{X_t - X_n}^{2p} \right]\\
   & \le C
    \E\left[ \sup_{n \le t \le n+1} \norm{\int_n^t \alpha( \norm{b}^2 - (\bmu_u \cdot
    b)^2) - \varepsilon_u^2 (\alpha^2 + 1) \bmu_u \cdot b \, du}^{2p}\right] \\
    & \quad + 
    C \E\left[\sup_{n \le t \le n+1} \norm{\int_n^t \varepsilon_u (\bmu_u \wedge b + \alpha(b - (\bmu_u \cdot b)\bmu_u))
    \cdot dW_u}^{2p}  \right] \\
   & \le C
   \E\left[ \int_n^{n+1} \alpha^{2p} \left( \norm{b}^2 - (\bmu_u \cdot
   b)^2\right)^{2p} + \varepsilon_u^{4p} (\alpha^2 + 1)^{2p} \,du\right] \\
    & \quad + 
    C \E\left[\left(\int_n^{n+1} \varepsilon_u^2(\alpha^2 + 1) (\norm{b}^2 - (\bmu_t \cdot
      b)^2)     du\right)^{p}  \right]
  \end{align*}
  where we have used Burkholder--Davis--Gundi's inequality to treat the stochastic
  integral term. From Proposition~\ref{prop:rate}, $\E[(\norm{b}^2 - (\bmu_u \cdot
  b)^2)^p] = O(\varepsilon_u^{2p})$ and we can write
  \begin{align*}
    & \E\left[ \sup_{n \le t \le n+1} \abs{X_t - X_n}^{2p} \right]\\
    & \le C
    \int_n^{n+1} \E\left[( \norm{b}^2 - (\bmu_u \cdot
    b)^2)^{2p}\right]  + \varepsilon_u^{4p} \, du + 
    C \int_n^{n+1} \E\left[\varepsilon_u^{2p} (\norm{b}^2 - (\bmu_u \cdot
    b)^2)^p \right] du \\
    & \le C \int_n^{n+1} \varepsilon_u^{4p} du \le C \varepsilon_n^{4p}.
  \end{align*}
  Hence, we obtain
  \begin{align*}
    \E\left[ \sup_{n \le t \le n+1} \left(\varepsilon_n^{-2 + \eta} \abs{X_t -
    X_n}\right)^{2 p} \right] & \le C \varepsilon_n^{p \eta} \le C \int_{n-1}^n
    \varepsilon_u^{p \eta} \, du \\
    \sum_{n \ge 1}\E\left[ \sup_{n \le t \le n+1} \left(\varepsilon_n^{-2 + \eta} \abs{X_t -
    X_n}\right)^{2 p} \right]& \le C \int_{0}^\infty
    \varepsilon_u^{p \eta} \, du.
  \end{align*}
  For $p \eta \ge \gamma$, $\displaystyle\int_{0}^\infty \varepsilon_u^{p \eta} du < \infty$ and
  Borel--Cantelli's lemma yields that $$\lim_{n \to \infty} \sup_{n \le t \le n+1}
  \varepsilon_n^{-2 + \eta} \abs{X_t - X_n}^2 = 0.$$ Then, we easily conclude that
  \eqref{eq:lim_integer} holds for any real $t$ and not only integers.
\end{proof}

\subsection{The case $\alpha=0$}

In this case, the process $\bmu$ solves the simplified equation
\begin{align}
  \label{eq:sys_sto_strato_alpha0}
  d\bmu_t & = \left(L(b) - \varepsilon_t^2 I\right) \bmu_t dt - \varepsilon_t
  L(\bmu_t) dW_t. 
\end{align}
We can integrate this SDE as a classical ODE to obtain
\begin{align}
  d\left( \expp{-L(b) t + \int_0^t \varepsilon_u^2 du} \bmu_t \right) &= -  \expp{-L(b) t +
    \int_0^t \varepsilon_u^2 du} \varepsilon_t L(\bmu_t) dW_t \nonumber\\
  \label{eq:mu_sol_alpha0}
  \bmu_t - \expp{L(b) t - \int_0^t \varepsilon_u^2 du} \bmu_0  &= - \expp{L(b) t -
    \int_0^t \varepsilon_u^2 du} \int_0^t  \expp{-L(b) s + \int_0^s \varepsilon_u^2 du}
  \varepsilon_s L(\bmu_s) dW_s.
\end{align}
Let us introduce the square integrable martingale $N$ defined by
\begin{align*}
  N_t = \int_0^t  \expp{-L(b) s + \int_0^s \varepsilon_u^2 du} \varepsilon_s L(\bmu_s)
  dW_s.
\end{align*}
The long behaviour of $(\bmu_t)_t$ depends on the integrability of $(\varepsilon_t)_t$.
\begin{prop}
  If  $\int_0^\infty \varepsilon_u^2 du  = \infty$, $\E[\bmu_t] \to 0$ when $t \to
  \infty$.
\end{prop}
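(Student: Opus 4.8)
The plan is to read off the conclusion directly from the explicit representation~\eqref{eq:mu_sol_alpha0} by taking expectations. The right-hand side of~\eqref{eq:mu_sol_alpha0} is, up to the deterministic matrix factor $\expp{L(b) t - \int_0^t \varepsilon_u^2 du}$, equal to the process $N_t$. So the first step is to argue that $N$ is a genuine (not merely local) martingale with $\E[N_t] = 0$ for every $t$. This follows because the integrand $\expp{-L(b) s + \int_0^s \varepsilon_u^2 du}\varepsilon_s L(\bmu_s)$ is bounded on each interval $[0,t]$: the matrix exponentials and $\varepsilon_s$ are deterministic and continuous, while $L(\bmu_s)$ is bounded since $\norm{\bmu_s} = 1$. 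Hence each component of $N$ is a square-integrable stochastic integral, so $N$ is a true martingale starting from $0$, and $\E[N_t] = 0$.

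Next I would take expectations in~\eqref{eq:mu_sol_alpha0}. Since the prefactor $\expp{L(b) t - \int_0^t \varepsilon_u^2 du}$ is deterministic, it comes out of the expectation, and the martingale term contributes $\E[N_t] = 0$. This yields the clean identity
\begin{align*}
  \E[\bmu_t] = \expp{L(b) t - \int_0^t \varepsilon_u^2 du}\, \bmu_0
  = \expp{-\int_0^t \varepsilon_u^2 du}\, \expp{L(b) t}\, \bmu_0 .
\end{align*}
The expectation is well defined throughout because $\bmu_t$ is bounded.

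The final step exploits the structure of $L(b)$. Since $L(b)$ is antisymmetric, $\expp{L(b) t}$ is an orthogonal matrix (a rotation), so it preserves the Euclidean norm. Therefore
\begin{align*}
  \norm{\E[\bmu_t]} = \expp{-\int_0^t \varepsilon_u^2 du}\, \norm{\expp{L(b) t}\, \bmu_0}
  = \expp{-\int_0^t \varepsilon_u^2 du}\, \norm{\bmu_0} = \expp{-\int_0^t \varepsilon_u^2 du},
\end{align*}
using $\norm{\bmu_0} = 1$ as $\bmu_0 \in \sphere$. Under the hypothesis $\int_0^\infty \varepsilon_u^2 du = \infty$, the exponent tends to $-\infty$, so $\norm{\E[\bmu_t]} \to 0$, and hence $\E[\bmu_t] \to 0$ as $t \to \infty$.

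There is essentially no hard obstacle here; the only point that deserves care is the justification that $N$ is a true martingale so that $\E[N_t]$ genuinely vanishes, and this is handled by the boundedness of the integrand noted above. Everything else is the orthogonality of the rotation factor together with the divergence assumption on $\int \varepsilon_u^2 du$.
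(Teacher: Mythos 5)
Your proof is correct and follows essentially the same route as the paper: take expectations in the explicit representation~\eqref{eq:mu_sol_alpha0}, use that the stochastic integral has zero mean, and exploit the orthogonality of $\expp{L(b)t}$ together with the divergence of $\int_0^\infty \varepsilon_u^2\,du$. Your version is slightly more explicit (you justify that $N$ is a true martingale and obtain the equality $\norm{\E[\bmu_t]} = \expp{-\int_0^t \varepsilon_u^2 du}$ where the paper settles for the inequality $\norm{\E[\bmu_t]} \le \expp{-\int_0^t \varepsilon_u^2 du}\norm{\bmu_0}$), but the argument is the same.
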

\begin{proof}
  From Eq.~\eqref{eq:mu_sol_alpha0}, we deduce that
  \begin{align*}
    \norm{\E[\bmu_t]} & \le \expp{- \int_0^t \varepsilon_u^2 du} \Norm{\expp{L(b)t}}
    \norm{\bmu_0} \le \expp{- \int_0^t \varepsilon_u^2 du} 
    \norm{\bmu_0}.
  \end{align*}
  The non integrability of $(\varepsilon_t^2)_t$ yields the result.
\end{proof}

\begin{prop}
  \label{prop:as-rate-alpha0} When $\int_0^\infty \varepsilon_u^2 du  < \infty$, $N_t$
  converges a.s. to some random $N_\infty$ and 
  \begin{align*}
    \lim_{t \to \infty} \bmu_t - \expp{L(b) t - \int_0^\infty \varepsilon_u^2 du} (\bmu_0 -
    N_\infty) = 0 \; a.s
  \end{align*}
  and moreover for all $p \in \N$, there exists $c_p$, such that $\E[\norm{N_t}^{2p}] \le
  c_p\left( \expp{2 \int_0^t \varepsilon_s^2 ds} - 1\right)^p$ for all $t \ge 0$.
\end{prop}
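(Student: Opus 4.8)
The plan is to establish the three assertions in sequence, the linchpin being an explicit computation showing that the quadratic variation of $N$ is deterministic.

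First, to get the almost sure convergence of $N$, I would prove that $N$ is bounded in $L^2$ and then invoke the martingale convergence theorem. The key algebraic facts are that $\expp{-L(b)s}$ is orthogonal (since $L(b)$ is antisymmetric, so $(\expp{-L(b)s})^* = \expp{L(b)s}$ is its inverse) and that, using $L(x)^2 = x x^* - \abs{x}^2 I$ together with $\abs{\bmu_s} = 1$, one has $\tr(L(\bmu_s) L(\bmu_s)^*) = -\tr(L(\bmu_s)^2) = 2 \abs{\bmu_s}^2 = 2$. Writing $G(t) = \int_0^t \varepsilon_u^2 du$, the squared Frobenius norm of the integrand of $N_t$ is therefore $2 \expp{2 G(s)} \varepsilon_s^2$, and the Itô isometry gives
\[
  \E[\norm{N_t}^2] = 2 \int_0^t \expp{2 G(s)} \varepsilon_s^2 \, ds = \expp{2 G(t)} - 1.
\]
Under $\int_0^\infty \varepsilon_u^2 du < \infty$ the right-hand side is bounded in $t$, so $N$ converges a.s. (and in $L^2$) to some $N_\infty$.

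For the moment bound, the same computation shows that the total quadratic variation $\sum_i \bracket{N^i}_t = 2\int_0^t \expp{2 G(s)} \varepsilon_s^2\, ds = \expp{2 G(t)} - 1$ is \emph{deterministic}. Applying the Burkholder--Davis--Gundy inequality to the vector-valued martingale $N$ then yields immediately
\[
  \E[\norm{N_t}^{2p}] \le \E\Big[\sup_{s \le t} \norm{N_s}^{2p}\Big] \le c_p \left(\expp{2 G(t)} - 1\right)^p,
\]
with $c_p$ the corresponding BDG constant; the case $p = 1$ recovers the previous identity.

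Finally, for the asymptotics of $\bmu$ I would rewrite the explicit solution \eqref{eq:mu_sol_alpha0} as $\bmu_t = \expp{L(b)t - G(t)}(\bmu_0 - N_t)$. Setting $G_\infty = \int_0^\infty \varepsilon_u^2 du$, subtracting the candidate limit and factoring out the rotation $\expp{L(b)t}$, whose operator norm equals $1$, gives
\[
  \norm{\bmu_t - \expp{L(b)t - G_\infty}(\bmu_0 - N_\infty)} \le \norm{\expp{-G(t)}(\bmu_0 - N_t) - \expp{-G_\infty}(\bmu_0 - N_\infty)}.
\]
Since $G(t) \to G_\infty$ and $N_t \to N_\infty$ a.s., the right-hand side tends to $0$ a.s., which is the claim. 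The only genuinely delicate point is the bookkeeping for the matrix/vector-valued stochastic integral: one must correctly identify the quadratic variation of $N$ and exploit both the orthogonality of $\expp{-L(b)s}$ and the normalisation $\abs{\bmu_s} = 1$ to see that it is deterministic. Once this is in place, the rest follows from standard martingale estimates.
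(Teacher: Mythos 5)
Your proposal is correct and follows essentially the same route as the paper: $L^2$-boundedness of $N$ via the orthogonality of $\expp{-L(b)s}$ and $\abs{\bmu_s}=1$ to get a.s. convergence, the Burkholder--Davis--Gundy inequality for the moment bound, and factoring out the rotation $\expp{L(b)t}$ together with $G(t)\to G_\infty$ and $N_t \to N_\infty$ for the limit. The only difference is cosmetic: you compute the (deterministic) bracket exactly via $\tr(L(\bmu_s)L(\bmu_s)^*)=2$, where the paper simply bounds it, absorbing the constant into $c_p$.
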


When $\int_0^\infty \varepsilon_u^2 du  < \infty$, $\sup_t \E[\abs{N_t}^2] < \infty$ and then
$N$ converges a.s. to $N_\infty$ and $N_t = \E[N_\infty | \cf_t]$.  Hence, we clearly have
$\E[N_\infty] = 0$ and we obtain that $\lim_{t \to \infty} \E[\bmu_t] - \expp{L(b) t -
  \int_0^t \varepsilon_u^2 du} \bmu_0 = 0$ a.s. The term $\expp{L(b) t}$ makes
$\bmu_t$ move on the ring with level $\expp{- \int_0^\infty \varepsilon_u^2 du} (\bmu_0
\cdot b - N_\infty \cdot b)$. 

\begin{proof}
  From~\eqref{eq:mu_sol_alpha0}, we have
  \begin{align*}
    \bmu_t - \expp{L(b) t - \int_0^t \varepsilon_u^2 du} (\bmu_0 - N_t) = 0
  \end{align*}
  Hence, it is sufficient to prove that $\expp{L(b) t - \int_0^t \varepsilon_u^2 du} (\bmu_0
  - N_t)  - \expp{L(b) t - \int_0^\infty \varepsilon_u^2 du} (\bmu_0 -
  N_\infty) \to 0$ a.s.
  \begin{align*}
    &\norm{\expp{L(b) t} \left( \expp{- \int_0^t \varepsilon_u^2 du} (\bmu_0
        - N_t)   - \expp{\int_0^\infty \varepsilon_u^2 du} (\bmu_0 -
        N_\infty) \right)}  \\
    & \le \norm{ - \expp{- \int_t^\infty \varepsilon_u^2 du} \bmu_0 -  \expp{- \int_0^t \varepsilon_u^2 du}
      N_t  + \expp{\int_0^\infty \varepsilon_u^2 du} N_\infty}  \\
    &\le \norm{ - \expp{- \int_t^\infty \varepsilon_u^2 du} \bmu_0} + \norm{-  \expp{- \int_0^t \varepsilon_u^2 du}
      N_t  + \expp{\int_0^\infty \varepsilon_u^2 du} N_\infty} \to 0.
  \end{align*}
  This last result yields the convergence announced in the Proposition. \\
  Let $p \in \N^*$. Using Burkholder--Davis--Gundi's inequality (see \cite{RY}), we have for
  $c_p = \left( \frac{p(p-1)}{2} \left( \frac{p}{p-1} \right)^p \right)^{p/2}$
  \begin{align*}
    \E[\norm{N_t}^{2p}] & \le c_p \E[\norm{\bracket{N}_t}^p] 
    \le c_p \E\left[ \left(\int_0^t  \varepsilon_s^2 \expp{2 \int_0^s \varepsilon_u^2 du} ds
      \right)^p \right] \le c_p 2^{-p} \left(\expp{2 \int_t^\infty \varepsilon_u^2 du} -1
    \right)^p.
  \end{align*}
\end{proof}
When $\varepsilon_t = \frac{\varepsilon}{(t+1)^{1/2 + \eta}}$ where $\eta >0$ and
$\varepsilon >0$, we can explicitly compute the upper--bound on $\E[\norm{N_t}^{2p}]$
\begin{align*}
  \left( \expp{2 \int_0^t \varepsilon_s^2 ds} - 1\right)^p = \left(
    \expp{\frac{\varepsilon^2}{ \eta} (1 - (t+1)^{-2 \eta})} - 1
  \right)^p \to \left(
    \expp{\frac{\varepsilon^2}{ \eta}} - 1 \right)^p.
\end{align*}
Usually, $\varepsilon^2 \ll \eta$ and therefore $\left( \expp{\frac{\varepsilon^2}{
      \eta}} - 1 \right)^p \approx \left( \frac{\varepsilon^2}{\eta} \right)^p$. Hence,
with a very high probability, $N_\infty$ remains tiny, and then for large $t$,
$\bmu_t$ oscillates as $\expp{L(b) t - \int_0^\infty \varepsilon_u^2 du} \bmu_0$, which is
non random.

\section{Numerical simulations}

In this section, we illustrate the theoretical results of Section~\ref{sec:strato} for
different values of $\alpha$ and functions $(\varepsilon_t)_t$ with different decaying
rates. To discretize the Stratonovich model~\eqref{eq:sys_sto_strato}, we would rather
consider its Itô form given by~\eqref{eq:strato}, on which we use an Euler scheme with
time step $\Delta t$. The Euler scheme has the advantage of being fully explicit and
therefore can be easily implemented. We could have straightaway discretized the
Stratonovich form~\eqref{eq:sys_sto_strato}, but the discretization of the Stratonovich
integral must be performed using a semi--implicit scheme, which requires the use of a
numerical solver at each iteration. In our case, the use of a semi--implicit scheme would
have preserved the  norm of the discretized process, which is not guarantied by an
explicit scheme. However, using the Euler scheme on the Itô form did not raise any
numerical difficulty.

Some of the graphs below have required to compute expectations, which were approximated
using a Monte Carlo method with $500$ samples. This may seem few samples but it proved to
be enough as the quantities involved have little variance especially when focusing on the
behaviour for large times.

\subsection{The case $\alpha > 0$}

\begin{figure}[h]
  \centering
  \subfigure[A.s. convergence]{\label{fig:ascv-L2-a-pos}\includegraphics[scale=0.38]{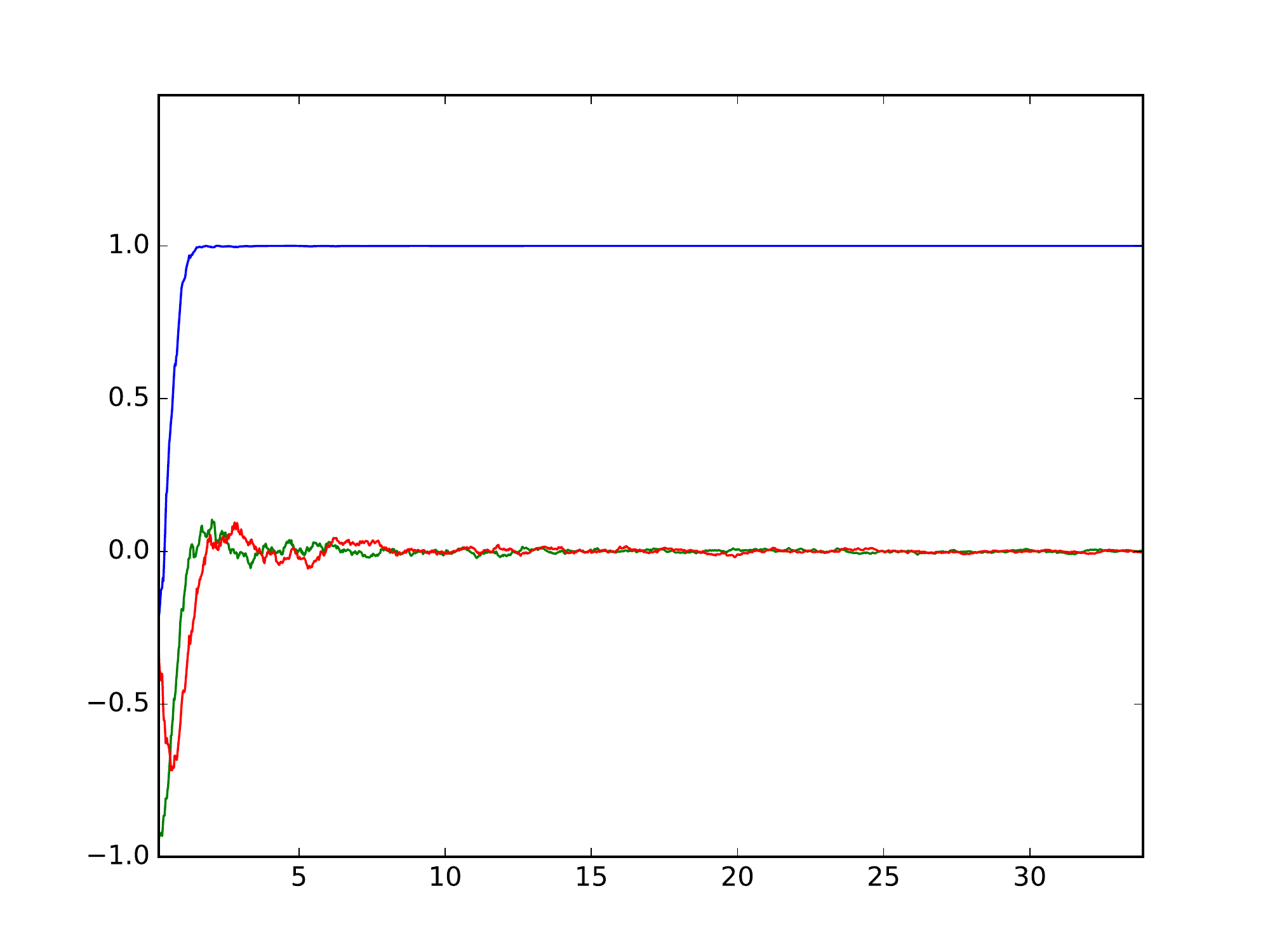}}
  \subfigure[Convergence rate in $L^2$ of $\norm{\frac{b}{\norm{b}} - \bmu_t}
  \varepsilon_t^{-1}$]{\label{fig:rate-L2-a-pos}\includegraphics[scale=0.38]{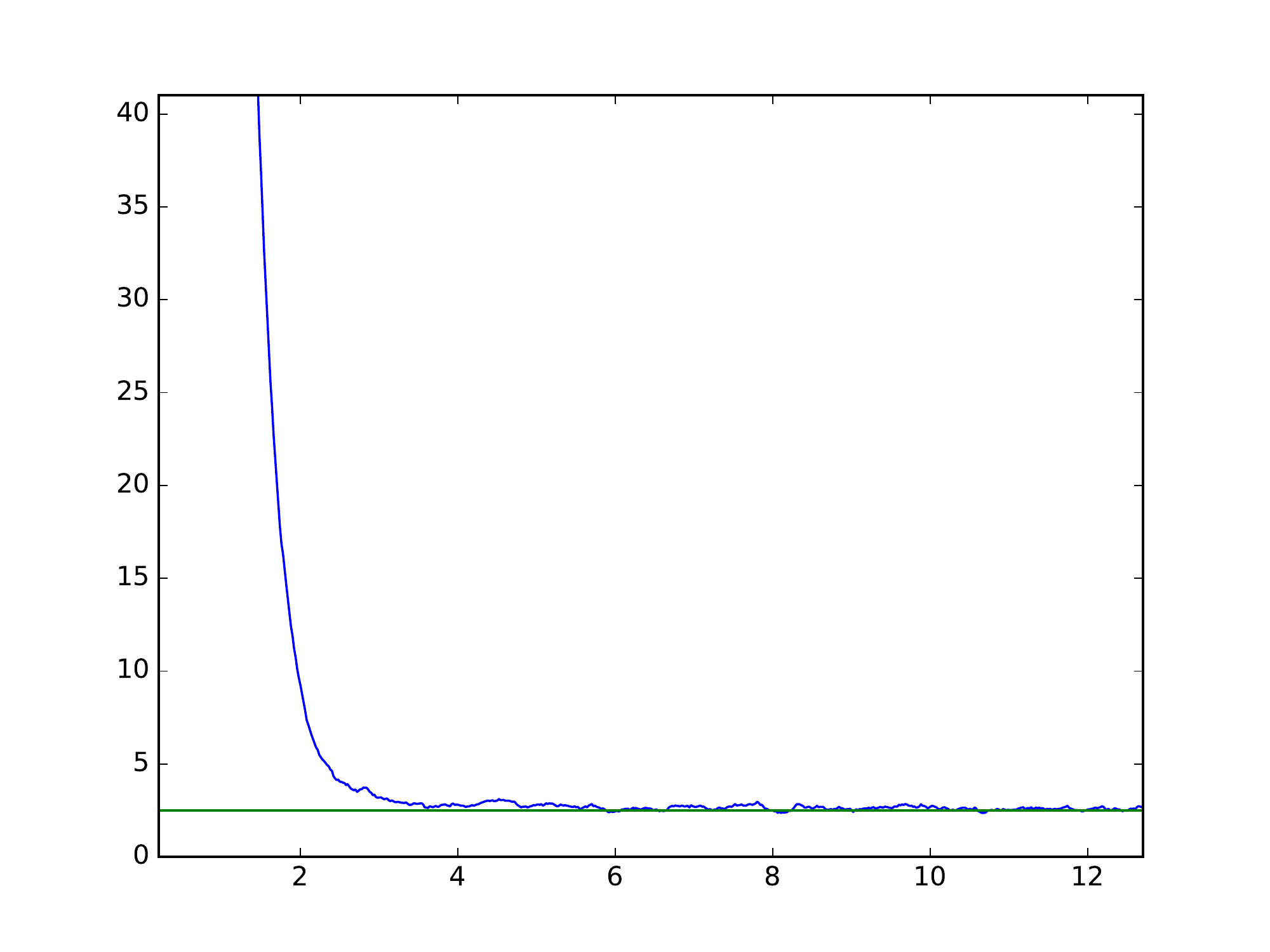}}
  \\
  \subfigure[Pathwise Convergence of $\norm{\frac{b}{\norm{b}} - \bmu_t} \varepsilon_t^{-1 +
      \eta}$ for  $\eta = 0.25$ (blue curve) or $\eta=0.1$ (green
    curve)]{\label{fig:rate-as-a-pos}\includegraphics[scale=0.38]{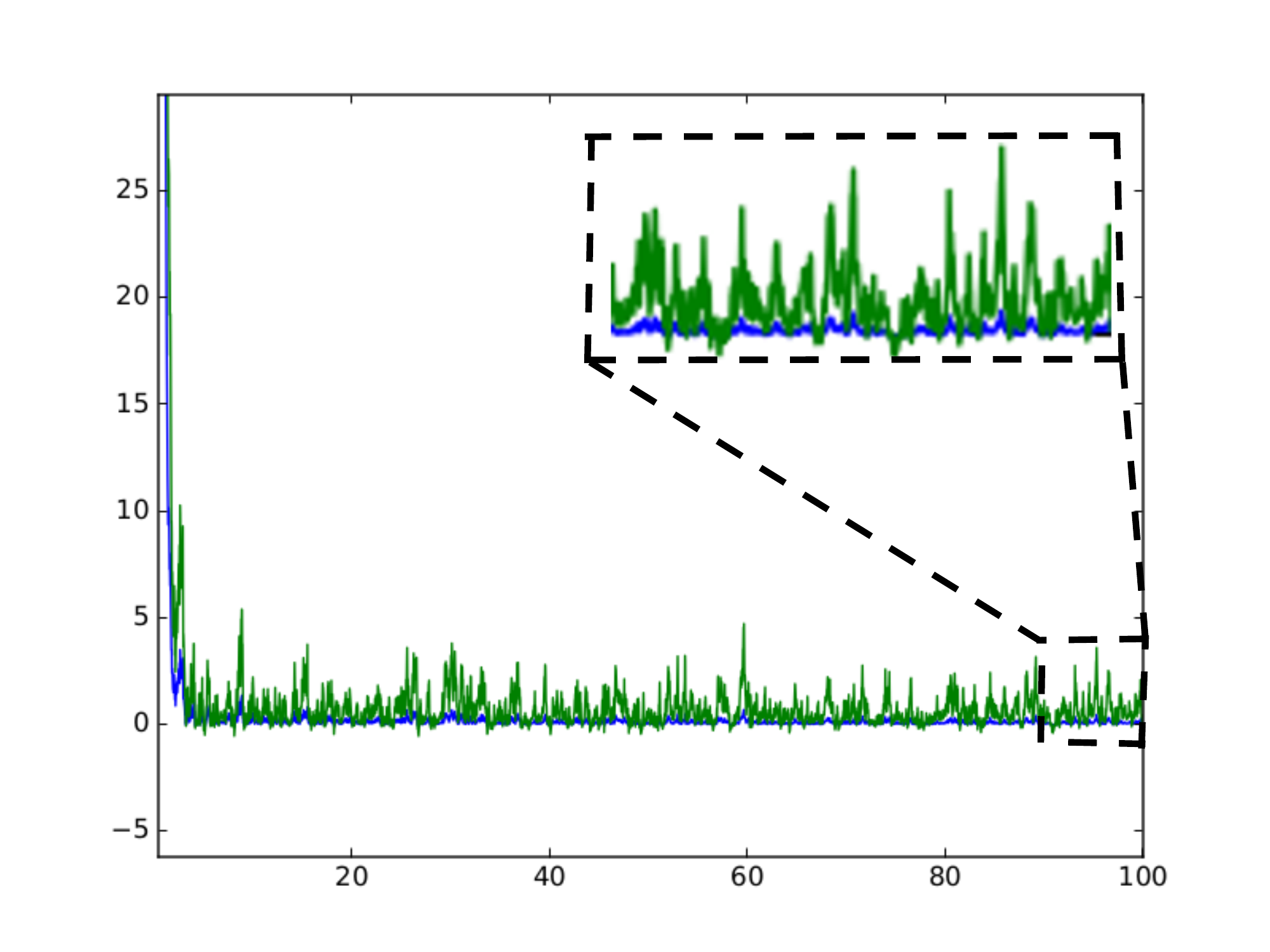}}
  \caption{Convergence of $(\bmu_t)_t$ for $\alpha=2$,
    $\Delta t = 2 \times 10^{-2}$ and $\varepsilon_t = 0.1 / (t + 1)$.}
  \label{fig:cv-L2-a-pos}
\end{figure}

Figure~\ref{fig:cv-L2-a-pos} shows the convergence of $\bmu_t$ for $\varepsilon_t$
satisfying $\int_0^\infty \varepsilon_t^2 dt < \infty$ when $\bmu_0$ is chosen such that
$-1 < \bmu_0 \cdot b < 0$. The blue curve of Figure~\ref{fig:ascv-L2-a-pos} corresponds to
the component of $\bmu_t$ along the direction of $b$. We can see that the a.s. convergence
of $\bmu_t$ to $b/\norm{b}$ is very smooth and fast. We recover in
Figure~\ref{fig:rate-L2-a-pos} the $L^2$ rate of Proposition~\ref{prop:rate}. In
particular, we notice that the transition phase is quite short as for $t=10$ we already
observe the  numerical convergence. Figure~\ref{fig:rate-as-a-pos} illustrates for the
same parameters the a.s.  convergence rate result (see Proposition~\ref{prop:as-rate}) for
two values of $\eta$. Non surprisingly, the larger $\eta$, the smoother the convergence.

When the magnitude of the noise decays slowly, the convergence should be less
smooth as suggested by Proposition~\ref{prop:rate}, which corresponds to what we can see
on Figure~\ref{fig:cv-L4-a-pos}. Closely looking at Figures~\ref{fig:ascv-L2-a-pos}
and~\ref{fig:ascv-L4-a-pos}, we notice that the component of $\bmu$ along $b$ converges
faster that the two others. Actually, from what we explained after
Proposition~\ref{prop:rate}, we have
\begin{align*}
  \norm{\frac{b}{\norm{b}} - \bmu_t}^2 &= \left(1 - \frac{b}{\norm{b}} \cdot \bmu_t\right)
  + (\bmu_t \cdot e_2)^2 + (\bmu_t \cdot e_3)^2 \\
  \left(1 - \bmu_t \cdot e_1\right) &=  (\bmu_t \cdot e_2)^2 + (\bmu_t \cdot e_3)^2.
\end{align*}
From this last equation, it is clear that there is a power $2$ difference between the
rates of convergence of  $\bmu_t \cdot b$ and of the two other components, which fully
matches our numerical observations.
\begin{figure}[h]
  \centering
  \subfigure[A.s convergence]{\label{fig:ascv-L4-a-pos}\includegraphics[scale=0.36]{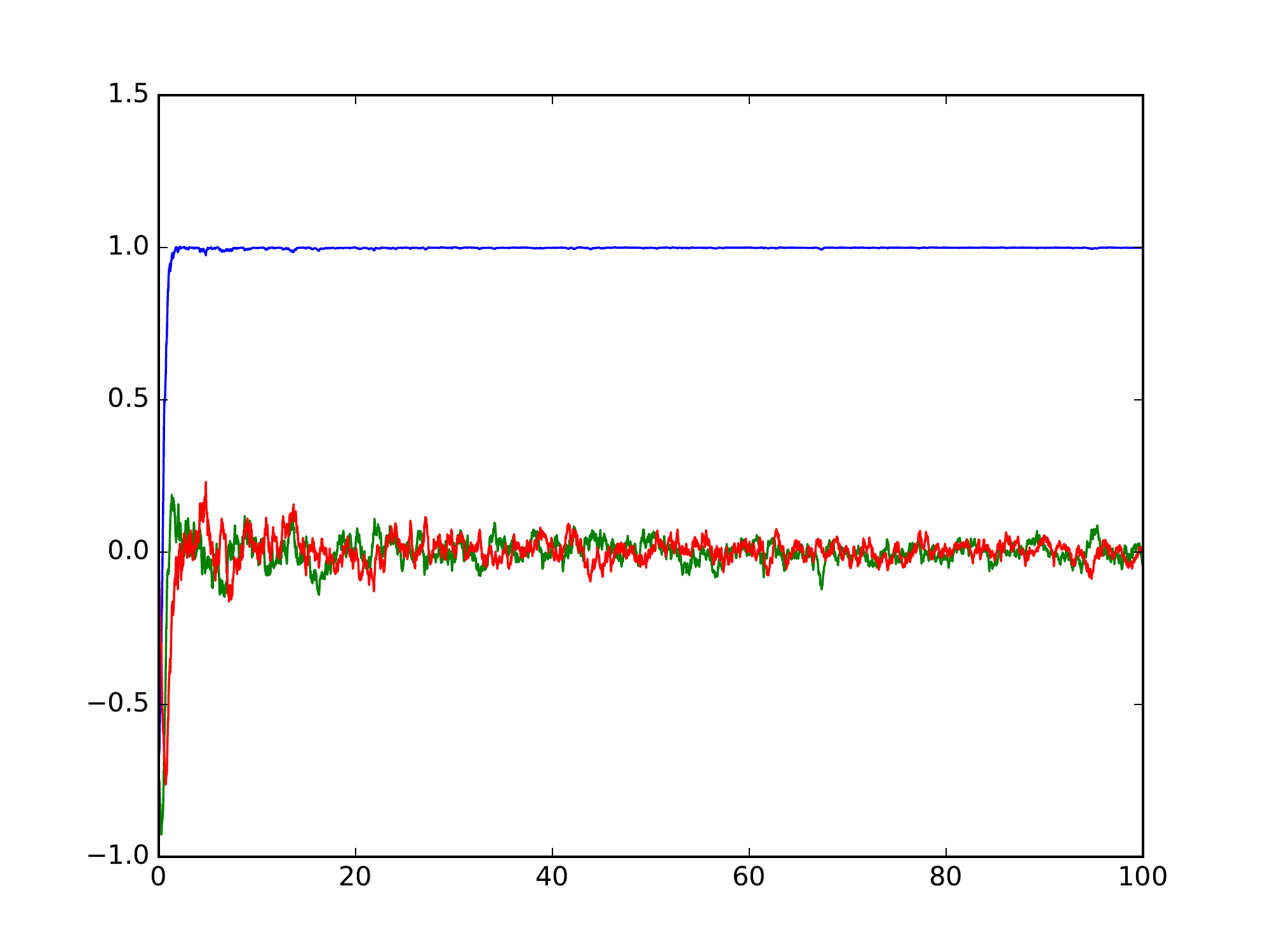}}
  \subfigure[Convergence rate in $L^1$]{\label{fig:rate-L4-a-pos}
    \includegraphics[scale=0.36]{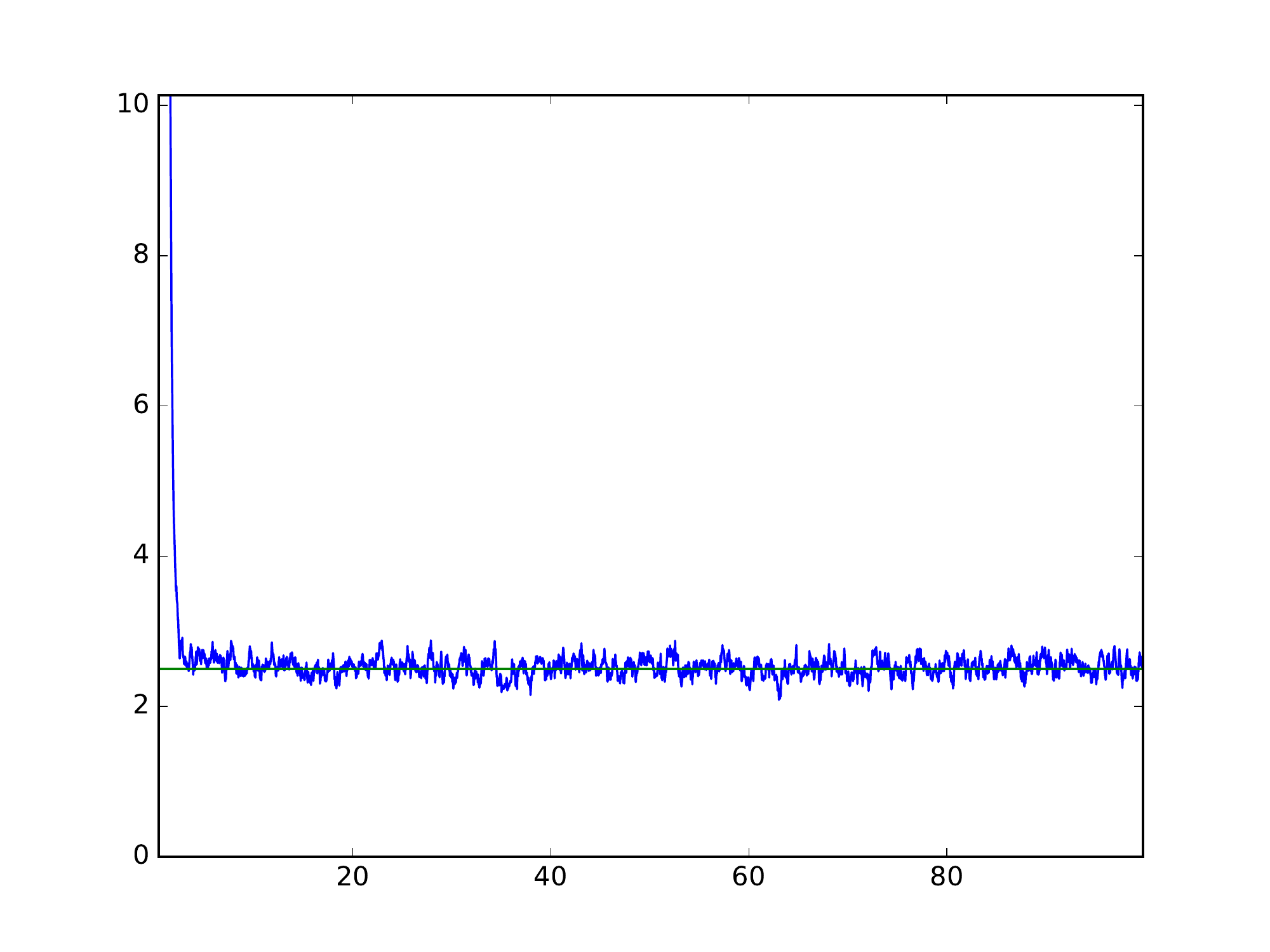}}
  \caption{A.s. convergence of $(\bmu_t)_t$ for $\alpha=2$,
    $\Delta t = 2 \times 10^{-2}$ and $\varepsilon_t = 0.1 / (t + 1)^{1/3}$.}
  \label{fig:cv-L4-a-pos}
\end{figure}

From the theoretical results of Section~\ref{sec:strato}, it is clear that the noise term
has a stabilizing effect on the system and is in particular responsible for escaping from
$-b$, which is an unstable critical point of the deterministic system.
Figure~\ref{fig:ascv-L1-a-pos} confirms that the stabilizing effect exists even when the
magnitude of the noise decays very fast --- $(\varepsilon_t)_t$ belongs to $L^1([0,
\infty))$ --- and $\bmu_0 = -b / \norm{b}$, which is the worst case scenario. After a very
short transition period during which $\bmu$ circles around on the sphere while heading to
$b / \norm{b}$, the process stabilizes around its limit and remains impressively smooth.
\begin{figure}[h]
  \centering
  \includegraphics[scale=0.5]{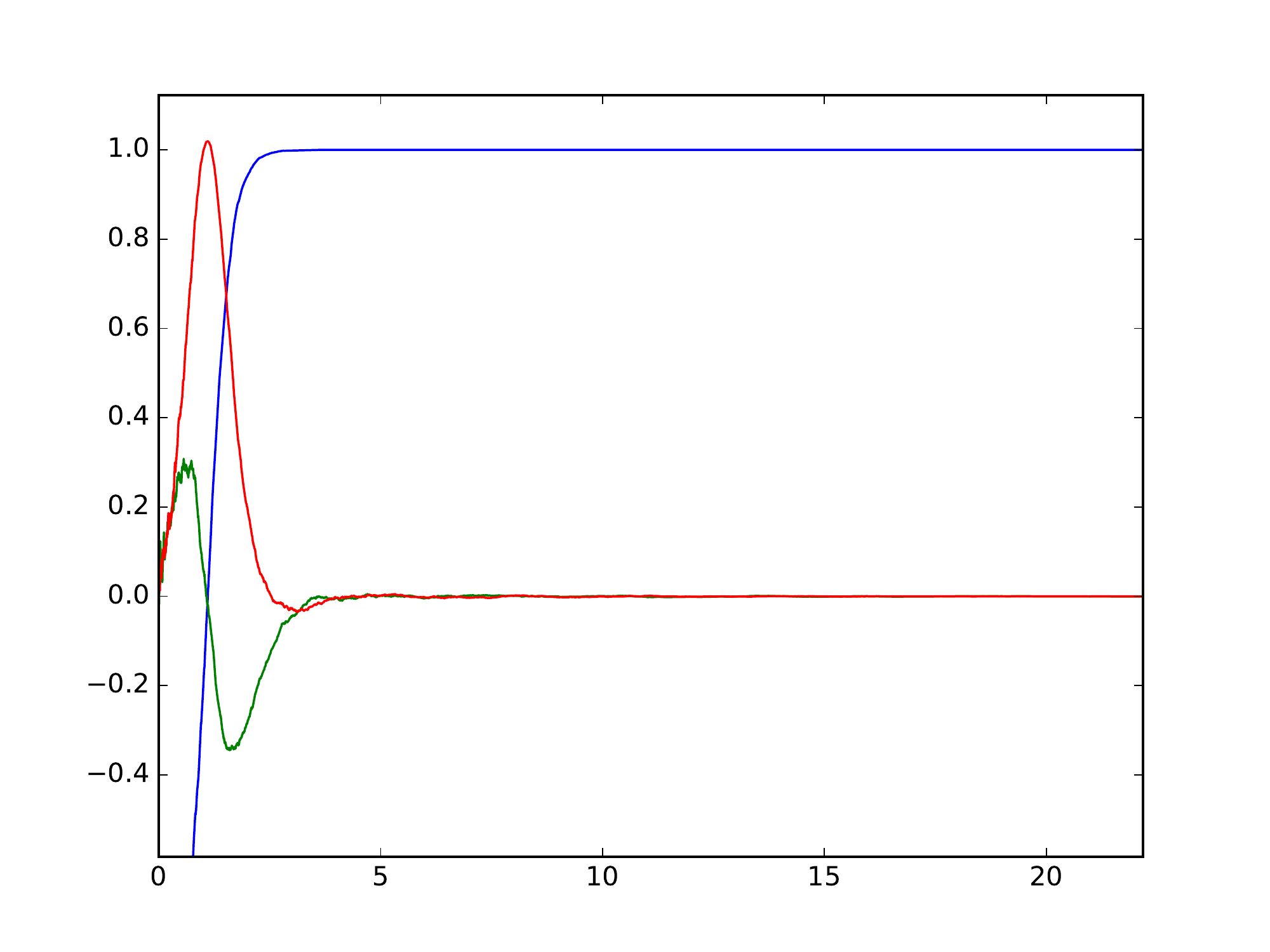}
  \caption{A.s. convergence of $(\bmu_t)_t$ for $\alpha=2$,
    $\Delta t = 2 \times 10^{-3}$, $\bmu_0 = -b / \norm{b}$ and $\varepsilon_t = 0.1 / (t + 1)^2$.}
  \label{fig:ascv-L1-a-pos}
\end{figure}

\pagebreak
\subsection{The case $\alpha = 0$}

As emphasized by the theoretical results, the behaviour of the process $(\bmu_t)_t$
depends very much on the value of $\alpha$. When $\alpha = 0$ and there is no noise,
$\bmu_t$ evolves on a circle with constant latitude. Actually, we recover a very similar
behaviour in Figure~\ref{fig:ascv-a-zero} when the noise magnitude decays quickly ---
$\int_0^\infty \varepsilon_t dt < \infty$. Clearly, $\bmu_t$ heads to a constant latitude
level and keeps turning on this parallel circle but unlikely to what happens in the
deterministic case, the latitude is not exactly determined by $\bmu_0 \cdot b$ but is
slightly randomly shifted as seen in Proposition~\ref{prop:as-rate-alpha0}. Closely
looking at Figure~\ref{fig:ascv-a-zero}, we can see that the amplitude of the oscillations
tends to increase a little with time, which is a consequence of the discretized process
not having a constant norm. This could be circumvented by considering a smaller
discretization step $\Delta t$.

When the noise decreases slowly, ie. $\int_0^\infty \varepsilon_t^2 dt = \infty$, its
effect remains over time and prevents any almost sure limiting behaviour to appear. The
process $(\bmu_t)$ keeps wandering around on the sphere and we see from
Figure~\ref{fig:E-slow-a-zero} that $\E[\bmu_t] \to 0$.

\begin{figure}[h]
  \centering
  \includegraphics[scale=0.5]{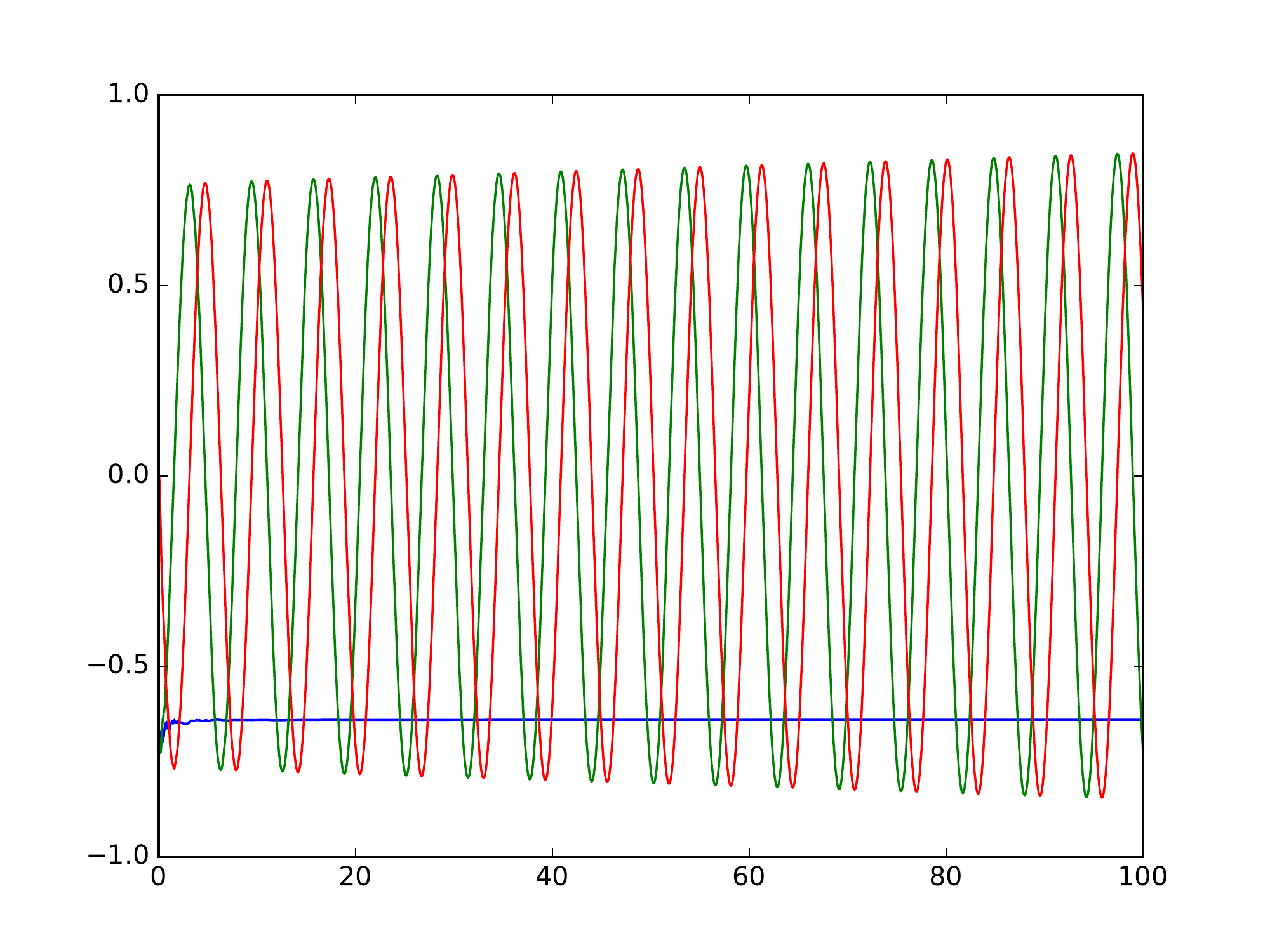}
  \caption{Convergence of $(\bmu_t)_t$ for $\varepsilon_t = 0.3 / (t + 1)^{2}$,
    $\Delta t = 2 \times 10^{-3}$.}
  \label{fig:ascv-a-zero}
\end{figure}

\begin{figure}[h]
  \centering
  \includegraphics[scale=0.5]{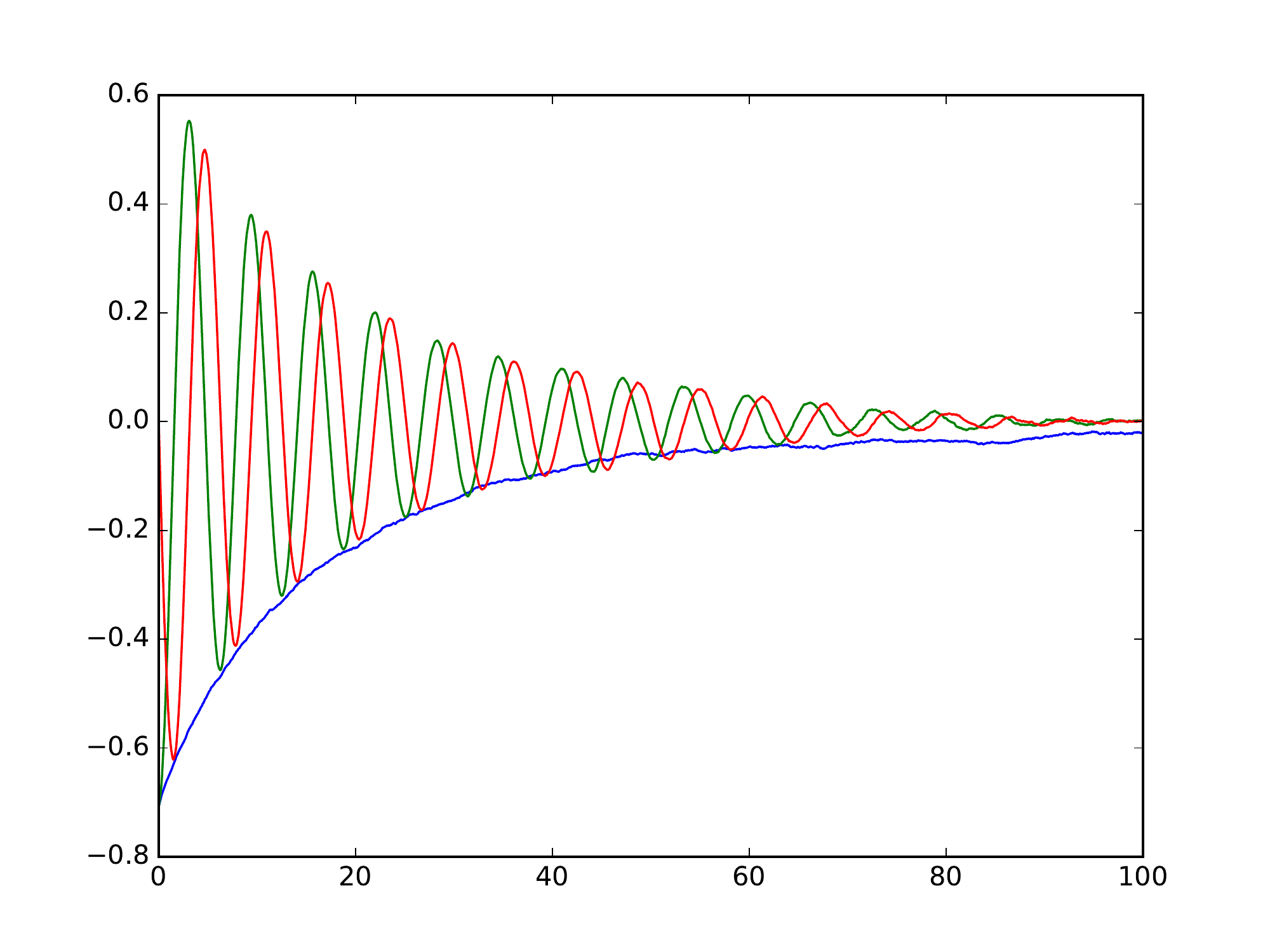}
  \caption{Convergence of $\E[\bmu_t]$ for $\varepsilon_t = 0.3 / (t + 1)^{0.1}$,
    $\Delta t = 2 \times 10^{-3}$.}
  \label{fig:E-slow-a-zero}
\end{figure}

\section{Conclusion}

In this work, we have discussed issues on the stochastic modelling of a ferromagnetic
nanoparticle. Among the different approaches, the Stratonovich approach with a decaying
noise magnitude showed up as the most natural one. We investigated the long time behaviour
of the model and proved its convergence to the unique stable equilibrium of the
deterministic system when $\alpha >0$. When $\alpha=0$, the evolution of the system
depends on the magnitude of the noise; when a limiting behaviour appears, the process
keeps revolving on a parallel ring. All these theoretical results have been illustrated by
numerical simulations, which help better understanding how thermal effects can be modelled
in micromagnetism.

\clearpage
\appendix
\section{Some technical lemmas}

\begin{lemma}
  \label{lem:expint}
  Let $F$ be a $C^1(\R_+)$ function tending to infinity. Assume 
  \begin{itemize}
    \item That there exists some $t_0$, such that for all $t \ge t_0$, $F(t) > 0$ and
      $F'(t) > 0$.
    \item $\lim_{t \to \infty} F'(t) = a >0$
  \end{itemize}
  Then for any function $g$ satisfying $\lim_{t \to \infty} g(t) = \ell$, we have
  \begin{align*}
    \lim_{t \to \infty} \expp{-F(t)} \int_0^t \expp{F(u)} g(u) = \frac{\ell}{a}.
  \end{align*}
\end{lemma}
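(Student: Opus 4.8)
The plan is to recognise this as a continuous-time Stolz--Cesàro (equivalently, L'Hôpital) statement: writing the quantity of interest as the ratio $\dfrac{\int_0^t \expp{F(u)} g(u)\,du}{\expp{F(t)}}$, the denominator tends to $+\infty$ because $F(t) \to \infty$, so the $\infty/\infty$ form of L'Hôpital's rule applies and yields $\lim_t \dfrac{\expp{F(t)} g(t)}{F'(t)\expp{F(t)}} = \lim_t \dfrac{g(t)}{F'(t)} = \dfrac{\ell}{a}$. Since this is an appendix lemma I would rather record a self-contained $\varepsilon$--$\delta$ argument that does not lean on the precise hypotheses of L'Hôpital --- in particular it avoids having to decide whether the numerator itself converges when $\ell = 0$.

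The key device is the identity $\expp{F(u)} = \frac{1}{F'(u)}\,\frac{d}{du}\expp{F(u)}$, valid for $u \ge t_0$ where $F'(u) > 0$. I would fix $\varepsilon \in (0,a)$ and pick $T \ge t_0$ so large that $|g(u) - \ell| \le \varepsilon$ and $a - \varepsilon \le F'(u) \le a + \varepsilon$ for all $u \ge T$, then split $\int_0^t = \int_0^T + \int_T^t$. The head term, once multiplied by $\expp{-F(t)}$, is a fixed constant times $\expp{-F(t)}$ and therefore vanishes as $t \to \infty$ because $F(t) \to \infty$; note that $F$ need not be positive or monotone on $[0,t_0]$, but that portion only feeds into this vanishing head term, so it is harmless.

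For the tail I would sandwich $\expp{F(u)}$ between $\frac{1}{a+\varepsilon}$ and $\frac{1}{a-\varepsilon}$ times $\frac{d}{du}\expp{F(u)}$ and integrate from $T$ to $t$, obtaining
\begin{align*}
  \frac{1 - \expp{F(T) - F(t)}}{a + \varepsilon} \le \expp{-F(t)} \int_T^t \expp{F(u)}\,du \le \frac{1 - \expp{F(T) - F(t)}}{a - \varepsilon}.
\end{align*}
Decomposing $g = \ell + (g - \ell)$ on $[T,t]$, the $\ell$-part is $\ell$ times the quantity just bounded, while the remainder is controlled in absolute value by $\varepsilon\, \expp{-F(t)}\int_T^t \expp{F(u)}\,du \le \frac{\varepsilon}{a - \varepsilon}$. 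Letting $t \to \infty$ (so that $\expp{F(T)-F(t)} \to 0$) and then $\varepsilon \to 0$ squeezes $\liminf_t$ and $\limsup_t$ of $\expp{-F(t)}\int_0^t \expp{F(u)} g(u)\,du$ to the common value $\frac{\ell}{a}$, which is the claim.

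I expect no genuine obstacle: the statement is a routine asymptotic, and the only points needing care are the two just flagged --- the irrelevance of the behaviour of $F$ on $[0,t_0]$, and the need to avoid assuming the numerator diverges. The comparison argument above handles both automatically and works uniformly whether $\ell$ is zero, positive, or negative, since $\frac{\ell}{a\pm\varepsilon} \to \frac{\ell}{a}$ in every case.
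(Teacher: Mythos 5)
Your proof is correct, but it takes a different route from the paper's. The paper performs the change of variables $v = F(u)$ (using that $F$ is a $C^1$ diffeomorphism from $(t_0,\infty)$ onto $(F(t_0),\infty)$), rewrites the quantity as $\expp{-F(t)} \int_{F(t_0)}^{F(t)} \expp{v}\, g(F^{-1}(v)) \frac{1}{F'\circ F^{-1}(v)}\, dv$, observes that the new integrand's coefficient tends to $\ell/a$, and then asserts that ``the result easily follows'' --- i.e.\ it reduces the general statement to the special case $F(t)=t$ and leaves that base case to the reader. You instead avoid any substitution and argue by direct comparison: sandwiching $\expp{F(u)}$ between $\frac{1}{a\pm\varepsilon}\frac{d}{du}\expp{F(u)}$ on $[T,t]$, integrating exactly, and splitting $g = \ell + (g-\ell)$. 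The trade-off is that the paper's substitution is shorter on the page but hides precisely the $\varepsilon$--squeeze that you write out, whereas your argument is fully self-contained, handles the head term on $[0,T]$ explicitly, and makes transparent why the sign of $\ell$ (including $\ell = 0$, where the numerator need not diverge) is immaterial. Both proofs use the same underlying identity $\frac{d}{du}\expp{F(u)} = F'(u)\expp{F(u)}$; yours is the more complete write-up and would be a perfectly acceptable replacement for the one in the appendix.
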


\begin{proof}
  As $F$ tends to infinity, it is clear that the integral over $[0, t_0]$ does not count
  into the final result. Considering the integral from $t_0$ to infinity is enough.

  $F$ defines a $C^1$ diffeomorphism from $(t_0, \infty)$ on $(F(t_0), \infty)$. Hence, we
  can set the change of variable $v = F(u)$ and write
  \begin{align*}
    \expp{-F(t)} \int_{t_0}^t \expp{F(u)} g(u)  = 
    \expp{-F(t)} \int_{F(t_0)}^{F(t)} \expp{v} g(F^{-1}(v)) \frac{1}{F' \circ F^{-1}(v)}
    dv.
  \end{align*}
  $F^{-1}$ tends to infinity, hence $\lim_{v \to \infty}  g(F^{-1}(v)) \frac{1}{F' \circ
    F^{-1}(v)}  = \frac{\ell}{a}$. Then, the result easily follows.
\end{proof}

\begin{lemma}\label{lem:liminfexp}
  Let $F$ be a $C^1(\R_+)$ function tending to infinity. Assume 
  \begin{itemize}
    \item there exists some $t_0$, such that for all $t \ge t_0$, $F(t) > 0$ and
      $F'(t) > 0$.
    \item $\lim_{t \to \infty} F'(t) = a >0$.
  \end{itemize}
  Then for any function $g$, we have
  \begin{align*}
    \liminf_{t \rightarrow +\infty} \expp{-F(t)} \int_0^t g(u)
    \expp{F(u)} du \ge \frac{1}{a} \liminf_{t \rightarrow +\infty} g(t).
  \end{align*}
\end{lemma}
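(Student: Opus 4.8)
The plan is to reduce everything to Lemma~\ref{lem:expint} applied with the constant function $g \equiv 1$, which already supplies the normalisation $\expp{-F(t)} \int_0^t \expp{F(u)} du \to 1/a$. Write $\ell = \liminf_{t \to \infty} g(t)$. If $\ell = -\infty$ the asserted inequality is vacuous, so I may assume $\ell$ is finite; the case $\ell = +\infty$ then follows by running the same argument on the truncations $\min(g, K)$ and letting $K \to \infty$.

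First I would fix $\epsilon > 0$ and, from the definition of $\liminf$, choose $T \ge t_0$ such that $g(u) \ge \ell - \epsilon$ for every $u \ge T$. Then I split the integral at $T$:
\begin{align*}
  \expp{-F(t)} \int_0^t g(u) \expp{F(u)} du = \expp{-F(t)} \int_0^T g(u) \expp{F(u)} du + \expp{-F(t)} \int_T^t g(u) \expp{F(u)} du.
\end{align*}
The first term on the right is a fixed constant times $\expp{-F(t)}$, hence tends to $0$ since $F \to \infty$.

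For the second term the crucial observation is that the weight $\expp{F(u)}$ is strictly positive, so the pointwise bound $g(u) \ge \ell - \epsilon$ integrates to
\begin{align*}
  \expp{-F(t)} \int_T^t g(u) \expp{F(u)} du \ge (\ell - \epsilon)\, \expp{-F(t)} \int_T^t \expp{F(u)} du,
\end{align*}
and this holds irrespective of the sign of $\ell - \epsilon$. Applying Lemma~\ref{lem:expint} with $g \equiv 1$ (and discarding the negligible $[0,T]$ piece exactly as above) gives $\expp{-F(t)} \int_T^t \expp{F(u)} du \to 1/a$, so the right-hand side converges to $(\ell - \epsilon)/a$. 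Since the first term converges to $0$, superadditivity of $\liminf$ yields $\liminf_{t \to \infty} \expp{-F(t)} \int_0^t g(u) \expp{F(u)} du \ge (\ell - \epsilon)/a$, and letting $\epsilon \to 0$ completes the argument.

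The proof is essentially routine once Lemma~\ref{lem:expint} is available; the one point I would flag as the main (small) obstacle is that $\ell - \epsilon$ may be negative, so one cannot simply factor out the constant and invoke a product-of-limits formula on two separate $\liminf$'s. The resolution is that the single lower bound $(\ell - \epsilon)\,\expp{-F(t)} \int_T^t \expp{F(u)} du$ is valid by positivity of $\expp{F(u)}$ regardless of sign, and this one quantity has a bona fide limit equal to $(\ell - \epsilon)/a$, so no sign bookkeeping is needed.
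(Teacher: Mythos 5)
Your proof is correct and follows essentially the same route as the paper's: split the integral at a time $T$ beyond which $g \ge \ell - \epsilon$, observe that the $[0,T]$ piece is killed by $\expp{-F(t)}$, bound the tail below using positivity of $\expp{F(u)}$, and invoke Lemma~\ref{lem:expint} before letting $\epsilon \to 0$. Your explicit remarks on the sign of $\ell - \epsilon$ and the infinite cases of $\ell$ are reasonable additions, but the argument is the same.
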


\begin{proof}
  We define $\ell = \displaystyle\liminf_{t \rightarrow +\infty} g(t)$. Let $\eta >0$, there
  exists $T>0$, such that for all $t \ge T$, $g(t) \ge \ell - \eta$.
  \begin{align*}
    \expp{-F(t)} \int_0^t g(u) \expp{F(u)} du & =  \expp{-F(t)} \int_0^T g(u) \expp{F(u)}
    du  +  \expp{-F(t)} \int_T^t g(u) \expp{F(u)} du  \\
    & \ge  \expp{-F(t-} \int_0^T g(u) \expp{F(u)}
    du  +  \expp{-F(t)} \int_T^t (\ell - \eta) \expp{F(u)} du.
  \end{align*}
  By applying Lemma~\ref{lem:expint}, we get
  \begin{align*}
    \liminf_{t \rightarrow +\infty} \expp{-F(t)} \int_0^t g(u) \expp{F(u)} du &
    \ge \frac{\ell-\eta}{a}.
  \end{align*}
  As the inequality holds for all $\eta$, the result easily follows.
\end{proof}

\bibliographystyle{abbrvnat}
\bibliography{stomag}

\end{document}